\title{On the hard Lefschetz theorem\\ for pseudoeffective line bundles}
\author{Xiaojun WU}
\date{\today}
\newtheorem{mythm}{Theorem}
\newtheorem{mylem}{Lemma}
\newtheorem{myprop}{Proposition}
\newtheorem{mycor}{Corollary}
\newtheorem{mydef}{Definition}
\newtheorem{myrem}{Remark}
\begin{document}
\def\cP{\mathcal{P}}
\def\Z{\mathbb{Z}}
\def\Q{\mathbb{Q}}  \def\C{\mathbb{C}}
 \def\R{\mathbb{R}}
 \def\N{\mathbb{N}}
 \def\ii{\mathrm{i}}
  \def\d{\partial}
 \def\dbar{{\overline{\partial}}}
\def\dzbar{{\overline{dz}}}
\def \ddbar {\partial \overline{\partial}}
\def\cK{\mathcal{K}}
\def\cE{\mathcal{E}}  \def\cO{\mathcal{O}}
\def\P{\mathbb{P}}
\def\cI{\mathcal{I}}
\def \loc{\mathrm{loc}}
\def \log{\mathrm{log}}
\def \cC{\mathcal{C}}
\bibliographystyle{plain}
\def \deg{\mathrm{deg}}
\def \RHS{\mathrm{RHS}}
\def \Ker{\mathrm{Ker}}
\def \Gr{\mathrm{Gr}}
\def \liminf{\mathrm{liminf}}
\def \ker{\mathrm{Ker}}
\bibliographystyle{plain}
\long\def\forget#1{}
\def\myparagraph#1{\vskip 6pt plus 2pt minus 2pt}

\maketitle
\begin{abstract}
In this note, we obtain a number of results related to the hard Lefschetz theorem for pseudoeffective line bundles, due to Demailly, Peternell and Schneider. Our first result states that the holomorphic sections produced by the theorem are in fact parallel, when viewed as currents with respect to the singular Chern connection associated with the metric. Our proof is based on a control of the covariant derivative in the approximation process used in the construction of the section. 
Then we show that we have an isomorphsim between such parallel sections and higher degree cohomology. As an application, we show that the closedness of such sections induces a linear subspace structure on the tangent bundle. Finally, we discuss some questions related to the optimality of the hard Lefschetz theorem.
\end{abstract}

\section{Introduction}

In this note, we establish a closedness and harmonicity result that complements the hard Lefschetz theorem for pseudoeffective line bundles proved in \cite{DPS01}. By following the arguments of the above paper, we show that the sections provided by the proof are in fact parallel, when viewed as currents with respect to the singular Chern connection of the metric. The first difficulty is to define the covariant derivative for such singular metrics, since in general the wedge product of two currents is not always well-defined. Another difficulty is to control the covariant derivative in the approximation process employed in the original proof.
\myparagraph{}
Let $X$ be a compact K\"ahler $n$-dimensional manifold, equipped with a K\"ahler metric, i.e.\ a positive definite Hermitian $(1,1)$-form $\omega=\ii\sum_{1\le j,k\le n}
\omega_{jk}(z)\,dz_j\wedge d\overline z_k$ such that $d\omega=0$. By definition 
a holomorphic line bundle $L$ on $X$ is said to be pseudoeffective if there exists a singular hermitian metric $h$ on $L$, given by $h(z)=e^{-\varphi(z)}$ with respect to a local trivialization $L_{|U}\simeq U\times\C$,
such that the curvature form
$$
\ii\Theta_{L,h}:=\ii\ddbar\varphi
$$
is (semi)positive in the sense of currents, i.e.\ $\varphi$ is locally
integrable and \hbox{$i\Theta_{L,h}\ge 0\,$}: in other words, the weight
function $\varphi$ is plurisubharmonic (psh) on the corresponding trivializing
open set~$U$. In this trivialization, if the metric is in fact smooth, the (1,0) part of the covariant derivative with respect to the associated Chern connection is given in the form:
$$\d_h=\d + \d \varphi \wedge \bullet,$$
and the total connection is $d_h=\d_h+\dbar$. An important fact is that $\d_h$ and $d_h$ still make sense for an arbitrary singular metric $h$ as above. Another basic concept relative to a singular metric is the notion of {\it multiplier ideal
sheaf}, introduced in \cite{Nad90}.

\begin{mydef} To any psh function $\varphi$ on an open subset $U$ of
a complex manifold~$X$, one associates the ``multiplier ideal sheaf'' $\cI(\varphi)\subset\cO_{X|U}$ of germs of
holomorphic functions $f\in\cO_{X,x}$, $x\in U$, such that $|f|^2e^{-\varphi}$
is integrable with respect to the Lebesgue measure in some local
coordinates near~$x$. We also define the global multiplier ideal sheaf $\cI(h)\subset\cO_X$ of a hermitian metric $h$ on $L\in\mathrm{Pic}(X)$ to be equal to $\cI(\varphi)$ on any open subset $U$ where $L_{|U}$ is trivial and $h=e^{-\varphi}$. In such a definition, we may in fact assume $\ii\Theta_{L,h}\geq -C\omega$,
i.e.\ locally $\varphi=\hbox{psh}+C^\infty$, we say in that case that
$\varphi$ is {\rm quasi-psh}.
\end{mydef}
The interest of considering quasi-psh functions is that on a compact manifold global psh functions are constant, while the space of quasi-psh functions is infinite dimensional. Among them, functions with analytic singularity will be of special concern for us.
With this notation, the following bundle valued generalization of the hard Lefschetz theorem has been established in \cite{DPS01}. The proof uses the natural $L^2$-resolution of the sheaf $\Omega_X^n \otimes L \otimes \cI (h)$.
\begin{mythm}{\rm(\cite{DPS01})}\label{dps-th}
Let $(L,h)$ be a pseudo-effective line bundle on a
compact K\"ahler manifold $(X,\omega)$ of dimension $n$, let
$\Theta_{L,h}\ge 0$ be its curvature current and $\cI(h)$ the
associated multiplier ideal sheaf. Then, the wedge multiplication
operator $\omega^q\wedge\bullet$ induces a surjective morphism
$$
\Phi^q_{\omega,h}:
H^0(X,\Omega_X^{n-q}\otimes L\otimes\cI(h))\longrightarrow
H^q(X,\Omega_X^n\otimes L\otimes\cI(h)).
$$
\end{mythm}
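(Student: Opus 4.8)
The plan is to compute both cohomology groups by $L^2$-Hodge theory and to transport the classical Bochner-type proof of the hard Lefschetz theorem into the twisted, singular setting. Along the $L^2$-resolution mentioned above, $H^q(X,\Omega_X^n\otimes L\otimes\cI(h))$ is computed by the complex of $L$-valued $(n,\bullet)$-forms that, together with their $\dbar$, are locally square-integrable against $h=e^{-\varphi}$; on the compact manifold $X$ this complex computes the cohomology, and --- modulo the approximation discussed below --- each class is represented by a $\dbar$-harmonic $(n,q)$-form $u$ with values in $L$. The multiplier ideal enters through the elementary observation that a global holomorphic $L$-valued $(n-q,0)$-form is square-integrable against $h$ precisely when its coefficients lie in $\cI(h)$; thus $H^0(X,\Omega_X^{n-q}\otimes L\otimes\cI(h))$ is exactly the space of $L^2$ holomorphic $(n-q,0)$-forms with values in $L$, and the task is to realise every harmonic class as $\omega^q\wedge(\text{such a section})$ modulo $\dbar$-exact terms.

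The algebraic input is that $\omega^q\wedge\bullet$ is a pointwise bundle isomorphism from $\Omega_X^{n-q}\otimes L$ onto the bundle of $L$-valued $(n,q)$-forms: the two bundles have the same rank $\binom nq$, and since every $(n-q,0)$-form is primitive and $q\le n$, the operator $L^q=\omega^q\wedge\bullet$ is injective there by the primitive form version of hard Lefschetz, hence bijective. So, given a harmonic $u$, I would set $s:=(\omega^q\wedge\bullet)^{-1}u=c_q\,\Lambda^q u$, a square-integrable $L$-valued $(n-q,0)$-form, which defines a class in $H^0(X,\Omega_X^{n-q}\otimes L\otimes\cI(h))$ with $\Phi^q_{\omega,h}([s])=[\omega^q\wedge s]=[u]$ as soon as $\dbar s=0$. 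For the holomorphy, consider first $h$ smooth with $\ii\Theta_{L,h}\ge0$. The Bochner--Kodaira--Nakano identity $\Delta''_h=\Delta'_h+[\ii\Theta_{L,h},\Lambda]$, together with the Nakano semipositivity of $[\ii\Theta_{L,h},\Lambda]$ on $(n,q)$-forms and the vanishing of $\d_h u$ for bidegree reasons, forces $\d_h^*u=0$ (and $[\ii\Theta_{L,h},\Lambda]u=0$) for every harmonic $u$. On a K\"ahler manifold $[\d_h,L]=0$, hence $[\d_h^*,\Lambda]=0$ and $\d_h^*\Lambda^k u=0$ for all $k$; inserting this into the K\"ahler identity $\dbar\Lambda=\Lambda\dbar+\ii\,\d_h^*$ and iterating gives $\dbar\Lambda^q u=\Lambda^q\dbar u=0$, that is $\dbar s=0$.

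The substantial point is to drop the smoothness hypothesis, and this is where I expect the main obstacle to sit. The plan is the approximation scheme of \cite{DPS01}: regularise $\varphi$ by quasi-psh weights $\varphi_\nu$ with analytic singularities, with $\ii\ddbar\varphi_\nu\ge-\varepsilon_\nu\omega$ and $\varepsilon_\nu\downarrow0$, run the genuine $L^2$-harmonic theory on the Zariski-open set $X\setminus Z_\nu$ where $\varphi_\nu$ is smooth --- equipped with a complete K\"ahler metric obtained by adding a small Poincar\'e-type correction to $\omega$ near $Z_\nu$ --- and then pass to the limit. On $X\setminus Z_\nu$ the curvature defect $-\varepsilon_\nu\omega$ converts exact Nakano positivity into an inequality with error $O(\varepsilon_\nu\|u_\nu\|^2)$, so the identities of the previous paragraph hold only up to a controlled defect, and the completeness correction perturbs the very form $\omega$ occurring in the statement. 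The hard part is therefore the double limit, $\nu\to\infty$ followed by the correction tending to $0$: one must produce uniform $L^2$-bounds on the harmonic representatives $u_\nu$ and their Lefschetz inverses $s_\nu$, rule out concentration of $L^2$-mass along $Z_\nu$ so that the weak limit pairs correctly with the fixed $\omega$ and is square-integrable against the original $h$ --- hence has coefficients in $\cI(h)$, a subtle matter since naive smoothing of $\varphi$ would enlarge the multiplier ideal --- and finally show $\dbar s_\nu\to0$ in $L^2$, so that the limit of the $s_\nu$ is a genuine holomorphic $L^2$-section of $\Omega_X^{n-q}\otimes L$ whose class is sent by $\Phi^q_{\omega,h}$ to the prescribed one.
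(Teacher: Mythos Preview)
Your proposal is correct and follows essentially the same strategy as the paper's reproduction of the \cite{DPS01} argument: pointwise Lefschetz inverse of a harmonic representative, Bochner-type control of its $\dbar$, equisingular approximation $\cI(\varphi_\nu)=\cI(\varphi)$ combined with complete K\"ahler metrics on $X\setminus Z_\nu$, and a double weak limit. The only cosmetic difference is that the paper packages the Bochner step into the single identity of Proposition~1, namely $\Vert\dbar u\Vert^2+\Vert\dbar^*_h u\Vert^2=\Vert\dbar v\Vert^2+\int\sum_{j\in J}\lambda_j|u_{IJ}|^2$, rather than iterating the K\"ahler commutation relations as you do, which slightly streamlines the error bound $\Vert\dbar v_{\varepsilon,\delta}\Vert^2\le q\varepsilon\Vert\beta\Vert^2$ in the approximated setting.
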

The special case when $L$ is nef is due to Takegoshi \cite{Tak97} (for the definition of nef in analytic setting, cf. [DPS94]). An even
more special case is when $L$ is semipositive, i.e.\ $L$ possesses a
smooth metric with semipositive curvature.
In that case, the multiple ideal sheaf $\cI(h)$ coincides with $\cO_X$
and we get the following consequence already observed by Enoki \cite{Eno93}
and Mourougane \cite{Mou95}.

\begin{mycor}
Let $(L,h)$ be a semipositive line bundle
on a compact K\"ahler mani\-fold $(X,\omega)$ of dimension~$n$. Then, the wedge
multiplication operator $\omega^q\wedge\bullet$ induces a surjective morphism
$$
\Phi^q_\omega:H^0(X,\Omega_X^{n-q}\otimes L)\longrightarrow
H^q(X,\Omega_X^n\otimes L).
$$
\end{mycor}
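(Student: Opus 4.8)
The plan is to deduce the statement from Theorem~\ref{dps-th} by identifying the multiplier ideal sheaf of a smooth semipositive metric. First, a semipositive line bundle is in particular pseudoeffective: the given smooth metric $h$ has $\ii\Theta_{L,h}\ge 0$, which is a fortiori a closed positive $(1,1)$-current, so Theorem~\ref{dps-th} applies to $(L,h)$. It then suffices to check that $\cI(h)=\cO_X$. Writing $h=e^{-\varphi}$ on a trivializing chart $U$ with $\varphi\in C^\infty(U)$, and shrinking $U$ so that $\varphi$ is bounded there, we see that $e^{-\varphi}$ is bounded above and below by positive constants on $U$; hence for every $x\in U$ and every germ $f\in\cO_{X,x}$ the function $|f|^2e^{-\varphi}$ is locally integrable, so $\cI(\varphi)=\cO_{X|U}$. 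Since $U$ is arbitrary, $\cI(h)=\cO_X$, and therefore $\Omega_X^{n-q}\otimes L\otimes\cI(h)=\Omega_X^{n-q}\otimes L$ and $\Omega_X^{n}\otimes L\otimes\cI(h)=\Omega_X^{n}\otimes L$. Under these identifications $\Phi^q_{\omega,h}$ is literally the wedge multiplication $\Phi^q_\omega=\omega^q\wedge\bullet$, so the asserted surjectivity is exactly the content of Theorem~\ref{dps-th}.

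For completeness I would also indicate the original, self-contained Hodge-theoretic argument of Enoki and Mourougane, which does not use the full strength of Theorem~\ref{dps-th}. Using the smooth metric $h$, represent a class in $H^q(X,\Omega_X^n\otimes L)$ by an $L$-valued harmonic $(n,q)$-form $u$ for the $\dbar$-Laplacian $\Delta''_h$, so that $\dbar u=0$ and $\dbar^{*}u=0$. Integrating the Bochner--Kodaira--Nakano identity $\Delta''_h=\Delta'_h+[\ii\Theta_{L,h},\Lambda]$ against $u$ and using $\Delta''_h u=0$ gives $0=\|\d_h u\|^2+\|\d_h^{*}u\|^2+\int_X\langle[\ii\Theta_{L,h},\Lambda]u,u\rangle\,dV$, a sum of nonnegative terms because $[\ii\Theta_{L,h},\Lambda]$ is pointwise semipositive on forms of bidegree $(n,q)$ when $\ii\Theta_{L,h}\ge 0$; hence $\d_h u=0$ and $\d_h^{*}u=0$. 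Since the wedge multiplication $\omega^q\wedge\bullet$ restricts fiberwise to an isomorphism from $(n-q,0)$-forms onto $(n,q)$-forms (the hard Lefschetz statement for the exterior algebra), we may write $u=\omega^q\wedge v$ for a unique smooth $L$-valued $(n-q,0)$-form $v$, and $v$ is automatically primitive. It remains to show $\dbar v=0$, so that $v\in H^0(X,\Omega_X^{n-q}\otimes L)$ and $\Phi^q_\omega(v)=[u]$. Using the K\"ahler commutation relation $\d_h^{*}=\ii[\Lambda,\dbar]$, the vanishing $\dbar u=0$, the identity $\Lambda(\omega^q\wedge v)=q\,\omega^{q-1}\wedge v$ (valid since $v$ is primitive of degree $n-q$), and $\dbar\omega=0$, one computes $0=\d_h^{*}u=-\ii q\,\omega^{q-1}\wedge\dbar v$; since $\omega^{q-1}\wedge\bullet$ is injective on forms of degree $n-q+1$, this forces $\dbar v=0$.

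I expect no real obstacle. The first paragraph is merely an unwinding of definitions, resting on the fact that a continuous weight imposes no integrability constraint. In the second paragraph the only point deserving a word of justification is the semipositivity of $[\ii\Theta_{L,h},\Lambda]$ on $(n,q)$-forms, which is the standard pointwise linear-algebra computation underlying Kodaira--Nakano vanishing; note in particular that the genuine analytic difficulties that are the main concern of this note — making sense of these constructions for a truly singular metric with nontrivial $\cI(h)$ — do not occur here, since $h$ is smooth.
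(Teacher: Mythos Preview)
Your proof is correct and your first paragraph is exactly the paper's argument: the corollary is stated immediately after Theorem~\ref{dps-th} with the one-line justification that for a smooth semipositive metric the multiplier ideal sheaf $\cI(h)$ coincides with $\cO_X$. Your second paragraph is extra, but it is also sound and essentially parallels the self-contained smooth-case argument the paper gives later in Section~3; the only difference is that the paper there invokes Proposition~1 (the Bochner identity in the form $\Vert\dbar u\Vert^2+\Vert\dbar^*_h u\Vert^2=\Vert\dbar v\Vert^2+\int_X\sum_{j\in J}\lambda_j|u_{IJ}|^2$) to get $\dbar v=0$ in one stroke, whereas you reach the same conclusion via $\d_h^{*}u=0$ and the K\"ahler identity $\d_h^{*}=\ii[\Lambda,\dbar]$.
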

It should be observed that although all objects involved in Theorem 1
are algebraic when $X$ is a projective manifold, there is no known
algebraic proof of the statement; it is not even clear how to define
algebraically $\cI(h)$ in the case when $h=h_{\min}$ is a metric
with minimal singularity. The classical hard Lefschetz theorem is the case when $L$ is trivial or unitary flat; then $L$ has a (real analytic) metric $h$ of curvature equal to $0$, whence
$\cI(h) =\cO_X$.

In the pseudoeffective case, the Lefschetz morphism is in general no longer injective as in the classical hard Lefschetz theorem. An obvious counterexample can be obtained by taking $L=mA$ where $A$ is an ample divisor, so that $h^0(X,\Omega_X^{n-q}\otimes L) \sim Cm^n$ for $m$ big enough, but $h^q(X,\Omega_X^n\otimes L)=0$ if $q>0$.
We will show that one can again recover an isomorphism by replacing the left hand side with the space of parallel sections with respect to the singular metric.

The proof of the hard Lefschetz is obtained by constructing directly a preimage for any given element in $H^q(X,\Omega_X^n\otimes L\otimes\cI(h))$.
This is done by taking a weak limit of some subsequence of a bounded sequence in a suitable Hilbert space, using the fact that every bounded sequence of a Hilbert space admits a weakly convergent subsequence; notice however that there are no reason for these weak limits to be unique.
One can also view the proof of the hard Lefschetz theorem as the construction of an inverse operator  
$$H^q(X,\Omega_X^n\otimes L\otimes\cI(h))
\longrightarrow H^0(X,\Omega_X^{n-q}\otimes L\otimes\cI(h)),
$$
although it is not a priori obvious that the preimages can be chosen to depend linearly on the given classes in $H^q$. This question is more or less equivalent to asking whether there exists a natural subspace of $H^0(X,\Omega_X^{n-q}\otimes L\otimes\cI(h))$ such that the restriction of the Lefschetz morphism is an isomorphism.
\myparagraph{}
In the classical case $L=\cO_X$, one can observe that any section $u \in H^0(X, \Omega_X^{n-q})$ satisfies the additional condition $du=d_{h_0}u=0$. This is easily seen by Stokes formula, which implies
$$\int_X i du \wedge \overline{du} \wedge \omega^{q-1}=\int_X \{ du, du\}_{h_0} \wedge \omega^{q-1}=0,$$
where $h_0$ is the trivial smooth metric on $\cO_X\,$; in that formula (as well as in the rest of this paper), given a hermitian metric $h$, we denote by $\{u,v\}_h$ the natural sesquilinear pairing
$$\cC^{\infty}(M, \wedge^pT^*_X \otimes L) \times \cC^{\infty}(M, \wedge^qT^*_X \otimes L) \rightarrow \cC^{\infty}(M, \wedge^{p+q}T^*_X )$$
$$(u,v) \mapsto \{u,v\}_h$$
given by
$$\{u,v\}_h=\sum_{\lambda,\mu}i u_{\lambda} \wedge \bar{v}_{\mu} \langle e_{\lambda}, e_{\mu} \rangle_h$$
where $u=\sum u_{\lambda} \otimes e_{\lambda}$, $v=\sum v_{\mu} \otimes e_{\mu}$.
Another proof relies on the observation that $\dbar u=\dbar^{*} u=0$ (the second equality holds since $u$ is of bidegree $(n-q,0)$), whence $\Delta_{\dbar}u=0=\Delta_{\d} u$ by the K\"ahler identities. As~a consequence, we have $\d u=\d^* u=0$, and so $du=0$.

More generally, the proof of the hard Lefschetz theorem in \cite{DPS01} is obtained by constructing preimages as limits of forms given by the pointwise Lefschetz isomorphism. One then deals with a sequence of harmonic representatives of a given class in $H^q(X, K_X \otimes L \otimes \mathcal{I}(h))$, with respect to approximated, less singular, hermitian metrics $h_{\varepsilon}$.
It is thus natural to wonder whether the holomorphic sections provided by Theorem \ref{dps-th} also satisfy some sort of closedness property in the case of arbitrary pseudoeffective line bundles. In fact, we are going to prove that these sections are parallel with respect to the (possibly singular) Chern connection associated with the metric $h$; the proof employs similar arguments, but with the additional difficulty that one has to deal with non smooth metrics.
\begin{mythm}All holomorphic sections produced by Theorem \ref{dps-th} are parallel with respect to the Chern connection associated with the singular hermitian metric $h$ on~$L$.
\end{mythm}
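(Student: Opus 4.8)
The plan is to rerun the construction behind Theorem~\ref{dps-th} while keeping the covariant derivative under control at every stage. Recall how a section of the kind in question is produced: one regularizes the weight of $h$ as $\varphi=\lim_{\varepsilon\downarrow0}\varphi_\varepsilon$ by a decreasing family of quasi-psh weights $\varphi_\varepsilon$ with analytic singularities and $\ii\Theta_{L,h_\varepsilon}\ge-\varepsilon\omega$, $h_\varepsilon:=e^{-\varphi_\varepsilon}$; for each $\varepsilon$ one takes the $\dbar$-harmonic representative $u_\varepsilon$ (for the metric $h_\varepsilon$) of the image of the prescribed class in $H^q(X,\Omega^n_X\otimes L\otimes\cI(h_\varepsilon))$, writes $u_\varepsilon=\omega^q\wedge v_\varepsilon$ by the pointwise Lefschetz isomorphism, and sets $v=\lim_k v_{\varepsilon_k}$ along a weakly convergent subsequence; one has the uniform bound $\sup_\varepsilon\|v_\varepsilon\|_{\omega,h_\varepsilon}<\infty$, and by Theorem~\ref{dps-th} itself $v\in H^0(X,\Omega^{n-q}_X\otimes L\otimes\cI(h))$. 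Since $v$ is holomorphic we have $\dbar v=0$, so it is enough to prove $\d_h v=0$; note that $\d_h v=\d v+\d\varphi\wedge v$ is a well-defined current, the product $\d\varphi\wedge v$ making sense because $v$ has holomorphic (hence smooth) coefficients in any local trivialization of $L$, while $\d\varphi$ has locally integrable coefficients.

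The first point is that the approximants are already parallel for $h_\varepsilon$. An $\mathfrak{sl}_2$-computation on each fibre shows that $\Lambda u_\varepsilon$ equals $q\,\omega^{q-1}\wedge v_\varepsilon$ up to a nonzero constant; inserting this into the relevant K\"ahler identity between $\dbar^*_{h_\varepsilon}$, $\Lambda$ and $\d_{h_\varepsilon}$, using $\d_{h_\varepsilon}u_\varepsilon=0$ for degree reasons and $\d\omega=0$, and then $\dbar^*_{h_\varepsilon}u_\varepsilon=0$ (harmonicity), one gets $\omega^{q-1}\wedge\d_{h_\varepsilon}v_\varepsilon=0$ on the dense open set where $h_\varepsilon$ is smooth. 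Since hard Lefschetz on the fibre makes $\omega^{q-1}\wedge\bullet\colon\Lambda^{n-q+1,0}T^*_X\to\Lambda^{n,q-1}T^*_X$ an isomorphism (the two bundles have equal rank $\binom{n}{q-1}$, and wedging by $\omega^{q-1}$ is injective on the primitive bundle $\Lambda^{n-q+1,0}T^*_X$), this forces $\d_{h_\varepsilon}v_\varepsilon=0$ away from the singular locus of $h_\varepsilon$, hence as a current on $X$ once one checks, via the $L^2$-bound on $v_\varepsilon$, that no mass is supported on that analytic set. Dually, the Bochner--Kodaira--Nakano identity for the harmonic form $u_\varepsilon$ together with $\ii\Theta_{L,h_\varepsilon}\ge-\varepsilon\omega$ gives $\|\dbar v_\varepsilon\|^2_{\omega,h_\varepsilon}\le q\varepsilon\,\|v_\varepsilon\|^2_{\omega,h_\varepsilon}=O(\varepsilon)$: the $v_\varepsilon$ are asymptotically holomorphic.

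It remains to pass to the limit, and this is where the real work lies. One first upgrades $v_\varepsilon\to v$ to strong convergence in $L^2_{\loc}(X)$: locally $v_\varepsilon$ splits as a holomorphic form plus the minimal $L^2$-solution of $\dbar(\cdot)=\dbar v_\varepsilon$; the latter tends to $0$ in $L^2_{\loc}$ because $\dbar v_\varepsilon\to0$, and the former, being a normal family with weak limit $v$, converges to $v$ locally uniformly. Then $\d v_\varepsilon\to\d v$ as currents, so, since $\d_{h_\varepsilon}v_\varepsilon=\d v_\varepsilon+\d\varphi_\varepsilon\wedge v_\varepsilon$, showing $\d_{h_\varepsilon}v_\varepsilon\to\d_h v$ reduces to $\d\varphi_\varepsilon\wedge v_\varepsilon\to\d\varphi\wedge v$. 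The factors converge ($\d\varphi_\varepsilon\to\d\varphi$ in $L^p_{\loc}$ for some $p>1$, by the standard integrability of gradients of quasi-psh functions, and $v_\varepsilon\to v$ in $L^2_{\loc}$), so the products converge in measure on compacts; to upgrade this to convergence of currents one needs a uniform control of the $L^1_{\loc}$-mass of $\d\varphi_\varepsilon\wedge v_\varepsilon$ near the polar set of $h$. This is exactly where the flatness $\d_{h_\varepsilon}v_\varepsilon=0$ and the weighted $L^2$-bound $\int_X|v_\varepsilon|^2e^{-\varphi}\le C$ from the proof of Theorem~\ref{dps-th} are used: the latter forces $v_\varepsilon$ to vanish along the polar set to the order prescribed by $\cI(h)$, which tames the singularity of $\d\varphi_\varepsilon$ there and yields $\sup_\varepsilon\int_{K\cap\{\varphi<-M\}}|\d\varphi_\varepsilon\wedge v_\varepsilon|\to0$ as $M\to\infty$ for each compact $K$, i.e.\ uniform integrability. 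By Vitali's theorem $\d\varphi_\varepsilon\wedge v_\varepsilon\to\d\varphi\wedge v$ in $L^1_{\loc}$, hence as currents; comparing with $\d_{h_\varepsilon}v_\varepsilon\equiv0$ gives $\d_h v=0$, and with $\dbar v=0$ this is $d_h v=0$.

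The main obstacle is thus this last limiting step. Each $\d_{h_\varepsilon}v_\varepsilon$ vanishes identically, but $\d\varphi_\varepsilon\wedge v_\varepsilon$ is a wedge product of two families of currents that converge only weakly, and such products are not continuous under weak limits; the uniform control of their masses near the polar set --- precisely ``controlling the covariant derivative in the approximation process'' --- is the technical core, and it rests on both pieces of information extracted in the second step (the asymptotic holomorphicity and the flatness of the $v_\varepsilon$) together with the weighted $L^2$-estimates behind Theorem~\ref{dps-th}. A subordinate difficulty, already dealt with, is that $\d_h v$ has to be given a meaning for the singular metric $h$ at all; this is harmless here because $v$ is holomorphic, but it is the reason one cannot conclude by a naive global Stokes formula on $X$: such a computation leaves a curvature term $\int_X\{v,\Theta_{L,h}\wedge v\}_h\wedge\omega^{q-1}$, which is in general nonzero for an arbitrary holomorphic section $v\in H^0(X,\Omega_X^{n-q}\otimes L\otimes\cI(h))$, so that it is the approximation procedure that singles out the sections produced by Theorem~\ref{dps-th}.
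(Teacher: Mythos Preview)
Your argument contains a genuine gap at the uniform–integrability step. You assert the bound $\int_X |v_\varepsilon|^2 e^{-\varphi}\,dV_\omega\le C$ ``from the proof of Theorem~\ref{dps-th}'', but that proof only gives $\int_X |v_\varepsilon|^2 e^{-\varphi_\varepsilon}\,dV_\omega\le C$. Since the equisingular approximation satisfies $\varphi_\varepsilon\ge\varphi$ (the sequence decreases to $\varphi$), one has $e^{-\varphi_\varepsilon}\le e^{-\varphi}$, and the available bound is strictly \emph{weaker} than the one you use. So the claim that ``$v_\varepsilon$ vanishes along the polar set to the order prescribed by $\cI(h)$'' is not justified: each $v_\varepsilon$ is only controlled against the less singular weight $\varphi_\varepsilon$, and as $\varepsilon\to 0$ that weight itself moves. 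Without the uniform $L^2(e^{-\varphi})$ bound, the Vitali argument for $\d\varphi_\varepsilon\wedge v_\varepsilon\to\d\varphi\wedge v$ breaks down, and the alternative route via Cauchy--Schwarz also fails because $\d\varphi_\varepsilon$ has analytic singularities and is not uniformly in $L^2_{\loc}$.

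The paper circumvents this difficulty by decoupling the two limits. Rather than trying to pass $\d_{h_\varepsilon}v_\varepsilon\to\d_h v$ directly, it first derives (via Stokes) the integral inequality
\[
\int_X\{\d_{h_{\varepsilon_0}}w,\d_{h_{\varepsilon_0}}w\}_{h_{\varepsilon_0}}\wedge\omega^{q-1}
\;\le\;-\int_X\ii\Theta_{L,h_{\varepsilon_0}}\wedge\{w,w\}_{h_{\varepsilon_0}}\wedge\omega^{q-1}
\;\le\; q\varepsilon_0\,\Vert w\Vert^2_{h_{\varepsilon_0}}
\]
valid for any smooth compactly supported $(n{-}q,0)$-form $w$ on $X\setminus Z_{\varepsilon_0}$, and applies it with $\varepsilon_0$ \emph{fixed} to all the approximants $v_{\varepsilon,\delta,\nu}$ (whose supports lie in $X\setminus Z_\varepsilon\subset X\setminus Z_{\varepsilon_0}$ for $\varepsilon\le\varepsilon_0$). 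Using that $\d_{h_{\varepsilon_0}}$ is continuous from $L^2(h_{\varepsilon_0})$ to $L^1_{\loc}(h_0)$ (Section~2, second type of wedge product), one passes to the limit and obtains $\Vert\d_{h_{\varepsilon_0}}v\Vert_{L^1(h_0)}\le C\sqrt{q\varepsilon_0}$. Only \emph{then} does one let $\varepsilon_0\to 0$; at this stage $v$ is already holomorphic, so $\d\varphi_{\varepsilon_0}\wedge v\to\d\varphi\wedge v$ is the easy ``first type'' product (with $v\in L^\infty_{\loc}$), and one concludes $\d_h v=0$. This two–step limit is precisely what avoids having to control $\d\varphi_\varepsilon\wedge v_\varepsilon$ uniformly near the polar set of $h$.

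A secondary point: you suppress the complete K\"ahler metrics $\omega_{\varepsilon,\delta}$ on $X\setminus Z_\varepsilon$ and speak of ``the $\dbar$-harmonic representative $u_\varepsilon$ for the metric $h_\varepsilon$''. Since $h_\varepsilon$ is genuinely singular along $Z_\varepsilon$, Hodge theory on $(X,\omega,h_\varepsilon)$ is not directly available; the harmonic forms live on $(X\setminus Z_\varepsilon,\omega_{\varepsilon,\delta},h_\varepsilon)$, and $u_\varepsilon,v_\varepsilon$ arise only as weak limits as $\delta\to 0$. Your K\"ahler–identity computation showing $\omega_{\varepsilon,\delta}^{q-1}\wedge\d_{h_\varepsilon}v_{\varepsilon,\delta}=0$ is correct on $X\setminus Z_\varepsilon$ at the $(\varepsilon,\delta)$ level, and does give $\d_{h_\varepsilon}v_{\varepsilon,\delta}=0$ there; but this exact flatness does not by itself replace the quantitative $L^1$ control that the paper extracts, because the obstruction lies entirely in the $\varepsilon\to 0$ limit.
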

More precisely, as $h$ can be singular, this means that in local coordinates, any such holomorphic section $s \in H^0(X,\Omega_X^{n-q}\otimes L\otimes\cI(h))$ satisfies
$$\d_h s=\d s +\d \varphi \wedge s =0$$
in the sense of currents. Since $\dbar s=0$, we conclude that $d_hs=\d_hs+\dbar s=0$. This property can be expressed by saying that the section $s$ is parallel with respect to~$d_h$. 

Now, let us consider the harmonicity. 
Assume first that the metric is semipositive (i.e. a smooth metric with positive Chern curvature).
By computing $\dbar(\d_hs)=0$, we get $\dbar\d \varphi\wedge s=0$, hence
$$i\Theta_{L,h} \wedge s=0.$$
As $\Delta_{\dbar}s=0$ ($s$ is a holomorphic section and $\dbar^*s=0$ by a bidegree consideration), the Kodaira-Nakano identity implies
$$\Delta_{\dbar}s-\Delta_{\d_h}s=[i\Theta_{L,h},\Lambda]s=i\Theta_{L,h} \Lambda s-\Lambda i\Theta_{L,h}s=-\Lambda i\Theta_{L,h}s=0,$$
by the fact that $\Lambda s=0$. Therefore $\Delta_{\d_h}s=0$. 
Since the metric is smooth, this is equivalent to the fact that
$\d_h s=0$ and $\d_h^* s=0$. If the metric is singular, we still have
$$i\Theta_{L,h} \wedge s=0$$
by the same arguments.
However, in the latter case, although the operator $\d_h$ is still a densely defined operator on $L^2(X,\Omega_X^{n-q}\otimes L,h)$ (cf. Remark 1), it is difficult to give an explicit expression of his Hilbert adjoint $\d_h^*$.
There may exist the boundary condition on the domain of $\d_h^*$ caused by integration by parts, while the singular part of a general positive singular metric could have very difficult topology.
Thus it is difficult to discuss the Hilbert adjoint $\d_h^*$ in general.
Nevertheless, the fact that the section is parallel with respect to the singular metric is sufficient to characterize the preimage of the wedge multiplication operator in the hard Lefschetz theorem.
\begin{mythm}
Let $(L,h)$ be a pseudo-effective line bundle on a
compact K\"ahler manifold $(X,\omega)$ of dimension $n$, let
$\Theta_{L,h}\ge 0$ be its curvature current and $\cI(h)$ the
associated multiplier ideal sheaf. Then,
the wedge multiplication operator $\omega^q \cdot \bullet$ induces a
linear isomorphism
$$
\Phi^q_\omega:H^0(X,\Omega_X^{n-q}\otimes L) \cap \Ker(\d_h) \longrightarrow
H^q(X,\Omega_X^n\otimes L).
$$
\end{mythm}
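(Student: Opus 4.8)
The plan is to recognise $\Phi^q_\omega$ as the restriction to $\Ker(\partial_h)$ of the surjection $\Phi^q_{\omega,h}$ of Theorem~\ref{dps-th}, to deduce surjectivity from the preceding theorem, and to obtain injectivity from a Stokes-type identity in the spirit of the classical computation recalled in the introduction. Throughout I read $\partial_h$ as the closed densely defined operator on $L^2(X,\Omega_X^{n-q}\otimes L,h)$ of Remark~1, so that an element of the source is a holomorphic section $s$ of $\Omega_X^{n-q}\otimes L$ which is $L^2$ with respect to $h$ --- equivalently a holomorphic section of $\Omega_X^{n-q}\otimes L\otimes\cI(h)$ --- and which satisfies $\partial_h s=0$. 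For such an $s$, the form $\omega^q\wedge s$ is $\dbar$-closed because $d\omega=0$ and $\dbar s=0$, and the pointwise Lefschetz isomorphism applied to the primitive $(n-q,0)$-form $s$ gives $|\omega^q\wedge s|^2=c_{n,q}\,|s|^2$ with $c_{n,q}>0$; hence $\omega^q\wedge s$ is $L^2$ for $h$ and represents a class in the target of $\Phi^q_{\omega,h}$ through the natural $L^2$-resolution of $\Omega_X^n\otimes L\otimes\cI(h)$. So $\Phi^q_\omega$ is well-defined, linear, and equal to $\Phi^q_{\omega,h}\big|_{\Ker(\partial_h)}$.

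Surjectivity is then immediate: Theorem~\ref{dps-th} says $\Phi^q_{\omega,h}$ is onto, and the preceding theorem says the preimages constructed in its proof are parallel, i.e.\ lie in $\Ker(\partial_h)$; so every class in the target is already of the form $\Phi^q_\omega(s)$ with $s$ in the source.

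The content of the statement is injectivity. Suppose $s$ lies in the source with $\Phi^q_\omega(s)=0$, i.e.\ $\omega^q\wedge s=\dbar\beta$ for some $\beta\in L^2(X,\Omega_X^n\otimes L,h)$ produced by the $L^2$-resolution. I would evaluate $\int_X\{\omega^q\wedge s,s\}_h$ in two ways. On one hand, since $\omega^q$ is a scalar form, $\{\omega^q\wedge s,s\}_h=\omega^q\wedge\{s,s\}_h$, and the primitive-form identity shows $\int_X\omega^q\wedge\{s,s\}_h$ is a positive multiple of $\|s\|^2_h$. On the other hand, parallelness gives $d_h s=\partial_h s+\dbar s=0$, and $\partial_h\beta=0$ for bidegree reasons (any $(n+1,\bullet)$-form vanishes), so $d_h\beta=\dbar\beta=\omega^q\wedge s$; the Leibniz rule for $\{\cdot,\cdot\}_h$ relative to the Chern connection then gives, as an identity of currents,
$$d\{\beta,s\}_h=\{d_h\beta,s\}_h\pm\{\beta,d_hs\}_h=\{\omega^q\wedge s,s\}_h.$$
Now $\{\beta,s\}_h$ has locally integrable coefficients, being dominated in a trivialization by $\tfrac12\bigl(|\beta|^2+|s|^2\bigr)e^{-\varphi}$, which is integrable since $\beta\in L^2(h)$ and $s\in\cI(h)$; so Stokes' theorem on the compact $X$ gives $\int_X d\{\beta,s\}_h=0$. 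Comparing the two evaluations, $\|s\|^2_h=0$, hence $s=0$. This yields injectivity, and the theorem.

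The step I expect to be the main obstacle --- and the reason one cannot simply argue through the Hilbert adjoint $\partial_h^*$, as the discussion before the statement explains --- is the justification of the Leibniz identity $d\{\beta,s\}_h=\{\dbar\beta,s\}_h$ at the level of currents, since $\beta$ is only $L^2(h)$ and $e^{-\varphi}$ need not be locally integrable, so the separate terms of a naive expansion need not even be well-defined currents. Parallelness is again the key point: $\partial_h s=0$ forces the a priori badly behaved term $\partial\varphi\wedge s$ occurring in the expansion to equal the smooth form $-\partial s$, so that after regularising the weight by a decreasing sequence of smooth psh functions $\varphi_j$ --- or, equivalently, using the metrics $h_\varepsilon\downarrow h$ from the proof of Theorem~\ref{dps-th} --- all the relevant terms converge in $L^1_{\loc}$ and the unwanted contribution cancels in the limit. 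Carrying this limiting argument out carefully, i.e.\ controlling the integration by parts near the pluripolar (hence negligible) set $\{\varphi=-\infty\}$, is the technical heart of the proof and proceeds exactly as the cut-off arguments already used for Theorem~\ref{dps-th} and the preceding theorem.
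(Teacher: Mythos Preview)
Your proof is correct and follows the same route as the paper's: surjectivity comes from Theorem~2, and injectivity from the Stokes identity $\|\omega^q\wedge s\|_h^2=\int_X\{\dbar\beta,s\}_h=\int_X d\{\beta,s\}_h=0$, using that $\{\beta,s\}_h\in L^1$ because both factors lie in $L^2(h)$. The paper simply asserts the Leibniz rule $\dbar\{v,u\}=\{\dbar v,u\}+(-1)^{n+q-1}\{v,\partial_h u\}$ (together with $\partial\{v,u\}=0$ by bidegree) and applies Stokes directly; your last paragraph anticipates more difficulty here than the paper acknowledges, but your regularisation sketch --- replacing $h$ by the smooth $h_\varepsilon$ of Section~2 and passing to the limit --- is exactly the kind of argument that justifies the identity, and the observation that $\partial_h s=0$ forces $\partial\varphi\wedge s=-\partial s$ to be smooth is indeed what makes the limit go through.
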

\myparagraph{}
In section~4, as a geometric application, we use the closedness property of the holomorphic sections produced by the hard Lefschetz theorem to derive the existence of a ``singular foliation'' of $X$ (in fact a linear subspace structure of $T_X$).

\begin{mythm}
Assume that $v \in H^0(X, \Omega_X^{n-q} \otimes L \otimes \cI(h)), q \geq 1$ is a parallel section with respect to the singular metric $h$.
In particular a section constructed by the hard Lefschetz theorem is such a section.  The interior product with $v$ gives an $\cO_X$- morphism (which is well defined throughout $X$)
$$F_v:T_X \to \Omega_X^{n-q-1} \otimes L$$
$$X \mapsto \iota_X v.$$
The kernel of $F_v$ defines an integrable coherent subsheaf of $\cO(T_X)$,
i.e.\ a holomorphic foliation.
\end{mythm}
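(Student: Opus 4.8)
The plan is to show that $\cK := \Ker(F_v) \subset \cO(T_X)$ is a coherent subsheaf that is closed under the Lie bracket; by the analytic Frobenius theorem for coherent subsheaves (integrability in the sense of being involutive and saturated), this gives the holomorphic foliation. Coherence is essentially automatic: $F_v$ is an $\cO_X$-linear morphism of coherent sheaves (the interior product $X \mapsto \iota_X v$ is $\cO_X$-linear in $X$ since $v$ is a fixed section), so $\cK = \Ker(F_v)$ is coherent. The substantive content is involutivity, and this is where the parallelism $\d_h v = 0$ enters.

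The key computation is the following. Since $\dbar v = 0$ as well, $v$ is $d_h$-closed: $d_h v = 0$ with $d_h = \d_h + \dbar$. Now I would use the Cartan-type formula relating the Lie derivative, the interior product, and the connection $d_h$ (which makes sense because $d_h$ is a genuine connection on $L$, with the $L$-part entering only through the weight): for vector fields $\xi, \eta$,
$$
\iota_{[\xi,\eta]} v = \iota_\xi \mathcal{L}_\eta v - \mathcal{L}_\eta \iota_\xi v = \iota_\xi(d_h \iota_\eta v + \iota_\eta d_h v) - (d_h \iota_\xi \iota_\eta v + \iota_\eta d_h \iota_\xi v),
$$
where $\mathcal{L}_\eta = d_h \iota_\eta + \iota_\eta d_h$ is the covariant Lie derivative along $\eta$. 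Using $d_h v = 0$, all terms on the right are built from $d_h \iota_\xi v$, $d_h \iota_\eta v$, and $d_h \iota_\xi \iota_\eta v$. Hence if $\xi, \eta$ are \emph{local holomorphic} sections of $\cK$, i.e. $\iota_\xi v = \iota_\eta v = 0$, then the first and third groups of terms vanish, and one is left with $\iota_{[\xi,\eta]} v = -\iota_\eta(d_h \iota_\xi v) + (\text{terms in }\iota_\xi v) = 0$. Thus $[\xi,\eta]$ is again a section of $\cK$, which is the involutivity we need. (One must be slightly careful that $[\xi,\eta]$ is holomorphic when $\xi,\eta$ are, which is standard, and that $\iota_\xi v$ is then holomorphic, so the vanishing statements are about holomorphic sections and the currents-vs-smooth issue does not create trouble on the locus where things are defined.)

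The main obstacle is precisely the singularity of $h$: the identity $\d_h v = 0$ holds only \emph{in the sense of currents}, and the Cartan formula manipulations above involve contracting $d_h v$ with vector fields and then taking $d_h$ again, operations which are not a priori legitimate for currents with distributional coefficients. My plan to circumvent this is to observe that, on the open set $X \setminus Z$ where $Z$ is the (analytic, by the strong openness / analyticity of the unbounded locus of $\varphi$, or at worst pluripolar) singular locus of $h$, the weight $\varphi$ is smooth, so $d_h$ is a genuine smooth connection there and the Cartan computation is classical; this shows $\cK$ is involutive on $X \setminus Z$. Since $\cK$ is coherent and $X \setminus Z$ is dense (and $\cK$ can be taken saturated, hence reflexive, so determined by its restriction to the complement of a small set), involutivity extends across $Z$: the bracket $[\xi,\eta]$ of two local sections of $\cK$ is a holomorphic vector field agreeing with a section of $\cK$ on a dense open set, hence lies in $\cK$ everywhere. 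Finally I would invoke the Frobenius theorem for involutive coherent subsheaves of $T_X$ to conclude that $\cK$ defines a holomorphic foliation, completing the proof. A remark worth including is that the section $v$ need not be one produced by the hard Lefschetz theorem for the argument to work — only the parallelism $\d_h v = 0$ is used — which is why the statement is phrased for a general parallel $v$.
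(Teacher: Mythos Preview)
Your core argument---coherence of $\Ker F_v$ plus a Cartan-formula computation showing involutivity from $d_h v=0$---is the same as the paper's. The paper trivializes $L$ locally as $v=u\otimes s_L$, expands $d_h v(X,Y,\bullet)=0$, observes that every term carrying the singular factor $d_h s_L=\d\varphi\cdot s_L$ is multiplied by $\iota_X u=0$ or $\iota_Y u=0$ and hence vanishes as a current, and is left with the purely holomorphic equation $du(X,Y,\bullet)=0$; the classical formula for $du$ then gives $u([X,Y],\bullet)=0$. This is exactly your Cartan identity written out in a trivialization.

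The gap in your write-up is the proposed handling of the singularity. Restricting to the locus where $h$ is smooth and extending by density does not work in general: for an arbitrary psh weight $\varphi$ the locus where $\varphi$ is $C^1$ (let alone $C^\infty$) can be empty---take $\varphi=\sum 2^{-k}\log|z-a_k|$ with $\{a_k\}$ dense. Neither strong openness nor Siu's analyticity of Lelong superlevel sets says anything about the smooth locus of $\varphi$; the pole set is pluripolar, but you need much more than finiteness of $\varphi$ for the classical Cartan calculus.

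The good news is that the workaround is unnecessary: your own formula is already rigorous as an identity of currents. Every term on the right-hand side of
\[
\iota_{[\xi,\eta]}v=\iota_\xi(d_h\iota_\eta v+\iota_\eta d_h v)-(d_h\iota_\eta\iota_\xi v+\iota_\eta d_h\iota_\xi v)
\]
is either $d_h$ applied to an identically zero \emph{smooth} form ($\iota_\xi v$, $\iota_\eta v$, $\iota_\eta\iota_\xi v$), or a smooth contraction of the zero current $d_h v$; in each case the term is the zero current without any regularity hypothesis on $\varphi$. Since the left-hand side is holomorphic, it vanishes pointwise. This is precisely how the paper argues (``the above equalities are calculated in the sense of currents''): the singular contributions disappear algebraically because they are paired with zero, not because one restricts to a set where $\varphi$ is nice. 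Replace your density argument with this observation and the proof is complete.
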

At the end of section 4, we show by a concrete example indicated to the author by Professor Andreas H\"oring that for a general preimage, instead of the one constructed by the hard Lefschetz theorem, the above process does not necessarily induce a foliation.
In fact, the kernel of $F_v$ defined in the theorem defines a foliation if and only if $v$ is a parallel section.
\paragraph{}
Finally, in the last sections of this work, we discuss the optimality
of the multiplier ideal sheaf $\cI(h)=\cI(\varphi)$ involved in the
hard Lefschetz theorem.  Demailly, Peternell and Schneider already
showed in \cite{DPS01} that one cannot omit the ideal sheaf even
when $L$ is taken to be nef, and gave a couterexample when $L=-K_X$ is
the anticanonical bundle. However, it might still be possible in some
cases to ``improve'' the ideal sheaf, for instance to replace it with
$\lim_{\delta\to 0_+}\cI((1-\delta)\varphi)\supset \cI(\varphi)$. When
$\varphi$ has analytic singularities, it may happen that the
inclusion be strict, but in general the limit need not even be a coherent
sheaf (see section 5). 
The abundance conjecture and the nefness of $L=K_X$ would imply the
semiampleness of $L$, so in that case, the ideal sheaf is definitely
not needed. 
For the general case, this seems to be a difficult problem.
Some discussions of these issues are conducted in section 6.\vskip4mm

\textbf{Acknowledgement.} I thank Jean-Pierre Demailly, my PhD supervisor, for his guidance, patience and generosity. 
I am indebted to Chen-Yu Chi, Andreas H\"oring and Dano Kim for very helpful suggestions and comments on earlier drafts of this paper.
I would also like to express my gratitude to colleagues of Institut Fourier for all the interesting discussions we had. During the course of this research, my work has been supported by a Doctoral Fellowship AMX attributed by \'Ecole Polytechnique and Ministère de l'Enseignement Supérieur et de la Recherche et de l’Innovation, and I have also benefited from the support of the European Research Consortium grant ALKAGE Nr.~670846 managed by J.-P.~Demailly.

\section{Definition of the covariant derivative}
In this section, we consider a pseudoeffective line bundle $(L,h)$ on a K\"ahler (non necessarily compact) manifold $(Y,\omega)$ where $h(z)=e^{-\varphi(z)}$ with respect to a local trivialization $L_{|U}\simeq U\times\C$ and $\omega$ is smooth. We denote by
$|~~|=|~~|_{\omega,h}$ the pointwise hermitian norm on
$\Lambda^{p,q}T^\star_Y\otimes L$ associated with $\omega$ and $h$,
and by $\Vert~~\Vert=\Vert~~\Vert_{\omega,h}$ the global $L^2$ norm
$$
\Vert u\Vert^2 = \int_Y |u|^2 dV_\omega \qquad
\hbox{where}\quad  dV_\omega={\omega^n\over n!}.
$$
Recall that since $\varphi$ is a quasi-psh function on $U$, its derivative $d\varphi$ belongs to $L^p_{\loc}(U)$ with respect to Lebesgue measure for every $p < 2$ (cf. e.g. Theorem 1.48 in \cite{GZ17}). This regularity is optimal since on $\C$, the psh function $\log|z|$ has a derivative not in $L^2_{\loc}(\C)$. We fix a smooth reference metric $h_0$ on $L$ (not necessarily semipositive) from which we can view any other singular metric as given by $h=h_0e^{-\varphi}$ where $\varphi$ is a quasi-psh function defined on $Y$. In general, for $u \in L^2_{\loc}(U,\Lambda^{p,q}T^\star_Y\otimes L,\omega,h_0)$, $\d \varphi \wedge u$ is not a priori well defined as a form with coefficients in $L^1_{\loc}(U,\Lambda^{p+1,q}T^\star_Y\otimes L, \omega,h_0)$ (with respect to the Lebesgue measure), at least if we make a naive use of the Cauchy-Schwarz inequality to get a current on $U$. 
(Note that in this case, $\d \varphi \in L^1_{\loc}(U,\Lambda^{p+1,q}T^\star_Y\otimes L, \omega,h_0)$ \emph{is however} a current on $U$.)
\myparagraph{}
We can overcome this problem in our proof, because in the construction of sections in the proof of the bundle valued hard Lefschetz theorem, this type of product can always be defined. In fact we always have additional assumptions on either $u$ or $\varphi$, as we will see next, and this will be enough to prove our main theorem. At the end of this section, we prove that the wedge product $\partial\varphi\wedge u$ is closed with respect to the $L^2$ topology when $\varphi$ is any psh function and $u$ is in $L^2_{\loc}(e^{-\varphi})\,$; this will be used in the following section.
\myparagraph{}
In the sequel, we will make use two types of such wedge products. The first type is when $u$ is holomorphic, so that the coefficients of $u$ are in fact bounded on any compact set, hence in $L^{\infty}_{\loc}$, thus $\d \varphi \wedge u$ has coefficients in 
$$
L^1_{\loc}(U,\Lambda^{p,q}T^\star_Y\otimes L, \omega,h_0) \times L^{\infty}_{\loc}(U,\Lambda^{1,0}T^\star_Y\otimes L, \omega,h_0) \subset L^1_{\loc}(U, \Lambda^{p+1,q}T^\star_Y\otimes L, \omega,h_0).
$$ 
Moreover, if $\varphi_i$ a sequence of quasi-psh functions such that $\varphi_i \to \varphi$ in $L^1_{\loc}(U,\omega, h_0)$, we have $\d \varphi_i \to \d \varphi$ in $L^1_{\loc}(U,\Lambda^{1,0}T^\star_Y\otimes L, \omega,h_0)$ hence $\d \varphi_i \wedge u\to \d \varphi \wedge u $ in $L^1_{\loc}(U,\Lambda^{p+1,q}T^\star_Y\otimes L, \omega,h_0)$, which implies in particular the weak convergence as currents (cf. e.g. theorem 1.48 in \cite{GZ17}).
\myparagraph{}
The second type is when $\varphi$ is an arbitrary psh function, taken as a local weight function of~$h$, and $u \in L^2_{\loc}(U,\Lambda^{p,q}T^\star_Y\otimes L, \omega,h)$. 
\forget{
To understand what happens, we start by the case when $\varphi$ has analytic singularities, although thi consideration is not necessary for the proof of general case.
Suppose that $\varphi$ has analytic singularities along a simple normal crossing divisor, i.e.\ in some coordinates,
 $$\varphi=c\,\log|z_1^{a_1} ...z_n^{a_n}|+\mathrm{C} ^{\infty}.$$
We only need to check the current is well defined near a point in Sing($h$), a situation which happens only in case $c > 0$.
When $u \in L^2_{\loc}(U,\Lambda^{p,q}T^\star_Y\otimes L, \omega,h)$, we have to show that $\d \varphi \wedge u$ is locally integrable with respect to the Lebesgue measure, and without loss of generality, we can suppose that the section is integrable on $U$, and not only on every compact in $U$, i.e.\
$$\int_U |\d \varphi \wedge u|_{\omega,h_0} dV_{\omega} < \infty.$$
It is true since

$$\leq C (\int_U |z_1^{a_1} ...z_n^{a_n}|^{c}|\sum \frac{a_i}{2} \frac{dz_i}{z_i}|^2_{\omega,h_0}dV_{\omega})^{\frac{1}{2}} (\int_U |u|^2_{\omega,h}dV_{\omega})^{\frac{1}{2}}$$
$$\leq C (\int_U |z_1^{a_1} ...z_n^{a_n}|^{c}\sum \frac{a_i^2}{4|z_i|^2}\prod idz_i \wedge \dzbar_i)^{\frac{1}{2}} (\int_U |u|^2_{\omega,h}dV_{\omega})^{\frac{1}{2}}.$$
Since $U$ is a local coordinate chart, we can suppose $U$ to be a polydisc $\prod D(0,r_j)$. The integrability of the first term in the integral is given by for any $j$ such that $a_j >0$,
$$\int_U |z_1^{a_1} ...z_n^{a_n}|^{c} \frac{a_j^2}{4|z_j|^2}\prod idz_i \wedge \dzbar_i \leq C_j \int_{0}^{r_j} r^{2a_jc-1} < \infty$$
since $c a_j-1 >-1$.
By assumption the second term in the integral is finite, so the product is finite. 
\myparagraph{}
If $\varphi$ has analytic singularities, there exists a modification of $\mu: \tilde{Y} \rightarrow Y$ such that $\mu^*(\cI(h))$ is an invertible sheaf associated to a simple normal crossing divisor, thanks to Hironaka's desingularization theorem \cite{Hir64}. Since we consider only local integrability of functions up to modification (by definition, a modification is a biholomorphism outside of proper analytic sets) and since analytic subsets are of Lebesgue measure zero, singularities are irrelevant with respect to integration. Therefore, we can reduce the general case by using a modification that converts the singular sets involved into simple normal crossing divisors.

For the general case where $\varphi$ is arbitrary psh function.} It is enough to prove that $\int_K |e^{\frac{\varphi}{2}} \d \varphi|^2_{\omega,h_0}dV_{\omega}$ is finite for any compact set $K \Subset U$. After shrinking $U$ into a smaller relatively compact open subset, we can suppose that $\varphi \leq C$ for some $C >0$, and also that there exists a non increasing sequence of smooth psh functions $\varphi_{\varepsilon_\nu}$ converging to $\varphi$ in $L^1(U)$ as $\varepsilon_\nu \to 0$.
The smooth psh function sequence can be obtained by taking a convolution with radially symmetric approximations of the Dirac measure. The upper bound is obtained by the maximum principle. The same is true for $\varphi_{\varepsilon_1}$.
In particular, $e^{\varphi} \in L^1(U)$. We prove that $e^{\varphi} \in PSH(U)$. Up to a subsequence, $e^{\varphi_{\varepsilon_\nu}} \to e^{\varphi}$ almost everywhere. The functions are uniformally bounded. 
By the dominated convergence theorem, $e^{\varphi_{\varepsilon_\nu}} \to e^{\varphi}$ in $L^1(U)$.
Since the space of the psh functions is closed in $L^1_{\rm loc}(U)$, $e^{\varphi} \in PSH(U)$. Hence
$$i \d \dbar e^{\varphi} =e^{\varphi}( i \d \dbar \varphi + i\d \varphi \wedge \dbar \varphi) \geq 0$$
as a current.
For any compact set $K \subset U$, the mass of $i\d \varphi \wedge \dbar \varphi e^{\varphi} \wedge
\omega^{n-1}$ on $K$ is the mass of $i \d \dbar (e^{\varphi}) \wedge \omega^{n-1}$ on $K$ minus the mass of $i \d \dbar \varphi e^{\varphi}\wedge \omega^{n-1}$ on $K$ which is finite.
This means $\int_K |e^{\frac{\varphi}{2}} \d \varphi|^2_{\omega,h_0}dV_{\omega}$ is finite.
And it is closed with respect to the $L^2$ topology in the sense that considering a sequence $u_j,u \in L^2_{\loc}(U,\Lambda^{p,q}T^\star_Y\otimes L, \omega,h)$ such that
$u_j \to u$, we have by the inequality 
$$\int_U |\d \varphi \wedge u|_{\omega,h_0} dV_{\omega} = \int_U |\d \varphi e^{\frac{\varphi}{2}}|_{\omega,h_0}|u|_{\omega,h}dV_{\omega}$$
$$\leq (\int_U |e^{\frac{\varphi}{2}} \d \varphi|^2_{\omega,h_0}dV_{\omega})^{\frac{1}{2}} (\int_U |u|^2_{\omega,h}dV_{\omega})^{\frac{1}{2}}$$
which shows that $\d \varphi \wedge u_j \to \d \varphi \wedge u$ in $L^1_{\loc}(U,\Lambda^{p+1,q}T^\star_Y\otimes L, \omega,h_0)$, in particulier as currents.

We should mention that some similar discussion of the definition of covariant derivative with respect to a singular metric can also be found in \cite{Dem02}. (The author thanks Professor A.~H\"oring for mentioning the reference.)

\begin{myrem}{\rm 
We check here that the operator 
$$\d_h: L^2(X, \wedge^{n-q}T_X^* \otimes L,h) \to L^2(X, \wedge^{n-q+1}T_X^* \otimes L,h)$$
is a closed densely defined operator.

By a partition of unity argument, it is enough to check this on a local coordinate chart $U$. Assume that we have $h=e^{-\varphi}$ on $U$ for some psh function $\varphi$.
We claim that functions of the type $e^{(1/2+\varepsilon)\varphi}f$ with $\varepsilon>0$ small enough and $f$ smooth with compact support are in the domain of definition of $\d_h$ and are dense in $L^2(U, \wedge^{n-q}T_X^* \otimes L,h)$. In fact, we have
$$\d_h(e^{(1/2+\varepsilon)\varphi}f)=(3/2+\varepsilon) \d \varphi \wedge e^{(1/2+\varepsilon)\varphi}f+e^{(1/2+\varepsilon)\varphi}\d f.$$
Without loss of generality, we can assume that $\varphi$ is bounded from above.
Since $f, \d f$ are bounded and $|\d \varphi |^2 e^{\varepsilon \varphi}dV_\omega\leq\frac{1}{\varepsilon} i\partial\dbar(e^{\varepsilon\varphi})\wedge\omega^{n-1}$ is integrable, we have 
$\int_U |\d \varphi \wedge e^{(1/2+\varepsilon)\varphi}f|^2 e^{-\varphi}dV_\omega 
< \infty$ and
$\int_U |e^{(1/2+\varepsilon)\varphi}\d f|^2 e^{-\varphi}dV_\omega < \infty$.
Thus $e^{(1/2+\varepsilon)\varphi}f$ is in the domain of definition.

To prove the density, it is equivalent to show that smooth functions with compact support are dense in $L^2(U,e^{-\varepsilon \varphi}dV)$ where $dV$ is the Lebesgue measure. 
By \cite{Sko}, for $\varepsilon >0$ small enough, (e.g. such that $\varepsilon \sup_{x \in U} \nu(\varphi,x) <1$ where $\nu(\varphi,x)$ is the Lelong number of $\varphi$ at $x$),
$ e^{-\varepsilon \varphi}$ is locally integrable, thus $e^{-\varepsilon\varphi}dV_\omega$ is a locally finite measure.
Any real function $u \in L^2(U,e^{-\varepsilon \varphi}dV)$ can be approximated
in norm by a bounded function $\tilde u_\nu=\max(\min(u,\nu),-\nu)$, 
and then $\tilde u_\nu$ can be approximated by smooth compactly supported functions $u_\nu$ by taking the product of $\tilde u_\nu$ with a cut-off function and taking a convolution.

By the last paragraphs before the remark, if $u_\nu \to u$ in $L^2(e^{-\varphi})$ topology, then $\d_h u_\nu \to \d_h u$ in the weak topology of currents.
This shows that $\d_h$ is a closed operator by definition. }
\end{myrem}

Assuming for the moment that theorem 2 is valid, we infer theorem 3.
A consequence is that the inverse operator in the proof of the hard Lefschetz theorem is linear, a fact that is a priori non trivial.
\begin{proof}[Proof of theorem 3]

By theorem 2, we know that the morphism is surjective.
Since the morphism is the restriction of the wedge mulitplication operator on some subspace, it is linear.
Thus to show that it is a linear isomorphism, it is enough to show that it is injective.

Assume that $u \in H^0(X,\Omega_X^{n-q}\otimes L\otimes\cI(h))$ such that $\d_h u=0$ and $u \wedge \omega \equiv 0$ in $H^q(X,K_X \otimes L\otimes\cI(h))$.
It means that there exists $v \in L^2(X, \wedge^{n,q-1}T_X^* \otimes L, h)$ such that 
$$u \wedge \omega^q= \dbar v.$$
To prove that $u=0$, it is equivalent to prove that $u \wedge \omega^q=0$ by the pointwise Lefschetz isomorphism.
To prove that $u \wedge \omega^q=0$, it is enough to prove that $\parallel \dbar v \parallel =0$.

We have that
$$\parallel \dbar v \parallel^2=\int_X \langle \dbar v,u \wedge \omega^q  \rangle dV_\omega=\int_X \{ \dbar v,u \}.$$
On the other hand, we have that
$$\dbar \{v, u\}=\{\dbar v,u \}+(-1)^{n+q-1}\{v, \d_h u\}$$
since $v$ is a $(n,q-1)$ form.
By assumption that $\d_h u=0$, $\dbar \{v, u\}=\{\dbar v,u \}$.
Since $u$ is a $(n-q,0)$ form and $v$ is a $(n,q-1)$ form, by degree reason, we have that $\d \{v, u\}=0$.

Remark that $\{v,u\}$ is well defined a current (in fact $L^1_{\loc}$ with respect to any smooth metric on~$L$) since both $v,u$ are $L^2$ with respect to the singular metric $h$.

Thus by Stokes theorem, we have that 
$$\parallel \dbar v \parallel^2=\int_X d \{v,u\}=0.$$
\end{proof}
\section{Proof of theorem 2}
This section follows closely \cite{DPS01} with some additional estimates for the integral norms of the terms involved at each step. First, we reproduce the variant of the Bochner formula used in \cite{DPS01}.
\begin{myprop}
Let $(Y,\omega)$ be a {\rm complete} K\"ahler
manifold and $(L,h)$ a smooth Hermitian line bundle such that the
curvature current possesses a uniform lower bound $\Theta_{L,h}\ge -C\omega$.
For every measurable $(n-q,0)$-form $v$ with $L^2$ coefficients and
values in $L$ such that $u=\omega^q\wedge v$ has differentials
$\dbar u$, $\dbar^* u$ also in $L^2$, we have
$$
\Vert\dbar u\Vert^2+\Vert\dbar^*_h u\Vert^2=
\Vert\dbar v\Vert^2+\int_Y\sum_{I,J}\Big(\sum_{j\in J}\lambda_j\Big)|u_{IJ}|^2
$$
$($here, all differentials are computed in the sense of distributions$)$ and where $\lambda_1\le\cdots\le\lambda_n$ are the curvature
eigenvalues of $i \Theta_{L,h}$ expressed in an orthonormal frame
$(\partial/\partial z_1,\ldots,\partial/\partial z_n)$ (at some fixed
point $x_0\in Y$), in such a way that
$$
\omega_{x_0}=\ii\sum_{1\le j\le n}dz_j\wedge d\overline z_j,\qquad
(i\Theta_{L,h})_{x_0}=dd^c\varphi_{x_0}=
\ii\sum_{1\le j\le n} \lambda_jdz_j\wedge d\overline z_j.
$$
\end{myprop}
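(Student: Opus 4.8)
The plan is to follow the classical route to Bochner-type identities, essentially reproducing the computation of \cite{DPS01} while making the reductions explicit. First I would reduce to the case where $v$, and hence $u=\omega^q\wedge v$, is smooth with compact support. This is where the completeness of $(Y,\omega)$ is used: one picks cut-off functions $\chi_\nu$ with $0\le\chi_\nu\le 1$, $\chi_\nu\uparrow 1$ and $|d\chi_\nu|\le 1/\nu$ (possible precisely because $\omega$ is complete), applies the identity to $\chi_\nu u$, and lets $\nu\to\infty$; the commutator terms $[\dbar,\chi_\nu]u$ and $[\dbar^*_h,\chi_\nu]u$ are $O(\|d\chi_\nu\|_\infty)\to 0$, while the uniform lower bound $\Theta_{L,h}\ge -C\omega$ keeps the curvature term controlled in the limit. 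One also has to approximate the merely measurable $v$ by smooth forms while keeping $\dbar u$, $\dbar^*_h u$ in $L^2$, e.g.\ by local regularization and a partition of unity. After this reduction all the integrations by parts below are legitimate.

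For $u$ smooth, compactly supported and of bidegree $(n,q)$, I would then invoke the Bochner--Kodaira--Nakano identity on the Kähler manifold $(Y,\omega)$,
$$
\|\dbar u\|^2+\|\dbar^*_h u\|^2=\|\d_h u\|^2+\|\d^*_h u\|^2+\int_Y\langle[i\Theta_{L,h},\Lambda]u,u\rangle\,dV_\omega .
$$
The decisive simplification comes from the bidegree of $u$: $\d_h u$ is of bidegree $(n+1,q)$ and hence vanishes, and $i\Theta_{L,h}\wedge u$ is of bidegree $(n+1,q+1)$ and hence also vanishes, so $[i\Theta_{L,h},\Lambda]u=i\Theta_{L,h}\wedge\Lambda u$. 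Evaluating $\langle[i\Theta_{L,h},\Lambda]u,u\rangle$ pointwise in an orthonormal frame diagonalizing the curvature, the usual expression $\sum_{I,J}\big(\sum_{j\in J}\lambda_j+\sum_{i\in I}\lambda_i-\sum_k\lambda_k\big)|u_{IJ}|^2$ collapses, because $|I|=n$ forces $\sum_{i\in I}\lambda_i=\sum_k\lambda_k$, to exactly the stated term $\sum_{I,J}\big(\sum_{j\in J}\lambda_j\big)|u_{IJ}|^2$. It thus remains only to prove that $\|\d^*_h u\|^2=\|\dbar v\|^2$.

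For this last identity I would use the Kähler commutation relations. Since $d\omega=0$ one has $\dbar u=\omega^q\wedge\dbar v$, and since $v$ has bidegree $(n-q,0)$ one has $\Lambda v=0$, so the twisted identity $\d^*_h=i[\Lambda,\dbar]$ gives $\d^*_h v=i\Lambda\dbar v$; combining this with $[\d^*_h,L]=-i\dbar$ (equivalently the $\mathfrak{sl}_2$-relations for the Lefschetz operators $L=\omega\wedge\bullet$ and $\Lambda$) yields an explicit formula for $\d^*_h u=\d^*_h(\omega^q\wedge v)$ in terms of the Lefschetz (primitive) decomposition of $\dbar v$. For bidegree reasons that decomposition has only two summands, a primitive $(n-q,1)$-part and $\omega$ wedged with a primitive $(n-q-1,0)$-part; using the orthogonality of the Lefschetz decomposition together with the elementary pointwise norms $|L^j\alpha|^2=j!\,\tfrac{(n-m)!}{(n-m-j)!}|\alpha|^2$ for $\alpha$ primitive of degree $m$, one checks that the pointwise norm of $\d^*_h u$ coincides with that of $\dbar v$ — and here one must keep careful track of the normalization constants, i.e.\ the powers of $q!$ produced by $\omega^q\wedge\bullet$ acting on primitive forms. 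Substituting $\|\d^*_h u\|^2=\|\dbar v\|^2$ and the curvature evaluation into the Bochner--Kodaira--Nakano identity gives the asserted formula.

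The step I expect to be the real obstacle is this last one: organizing the Lefschetz commutation relations and the pointwise norm identities so that the constants match exactly, since $\dbar v$ is not itself primitive and the combinatorial factors coming from iterated $L$'s and $\Lambda$'s must cancel precisely. The density/cut-off reduction on the non-compact complete manifold, though routine, is the other place where genuine care is needed, and it is exactly where the completeness hypothesis and the curvature lower bound enter.
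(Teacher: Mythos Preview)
The paper does not prove this proposition; it is quoted from \cite{DPS01} and used as a black box. Your outline is precisely the standard derivation and is essentially what \cite{DPS01} does: reduce to smooth compactly supported forms via completeness (the lower bound $\Theta_{L,h}\ge -C\omega$ is what makes the curvature integral pass to the limit under the cut-offs, and also what guarantees a posteriori that $\dbar v\in L^2$), apply Bochner--Kodaira--Nakano, use the bidegree vanishings $\partial_h u=0$ and $i\Theta_{L,h}\wedge u=0$, and then identify $\|\partial^*_h u\|^2$ with $\|\dbar v\|^2$ through the commutator $[\partial^*_h,L^q]=-iqL^{q-1}\dbar$ together with the two-term primitive decomposition of $\dbar v$.

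Your hedge about the combinatorial constants is well taken. With the ordinary Hodge norm one has $|L^q\alpha|^2=(q!)^2|\alpha|^2$ for $\alpha$ primitive of degree $n-q$, and the same $\mathfrak{sl}_2$ bookkeeping gives $|\partial^*_h u|^2=(q!)^2|\dbar v|^2$ pointwise; the identity exactly as stated, and the companion equality $\|v\|=\|u\|$ used a few lines later in the paper, hold once this $q!$ is absorbed into the normalization of the pointwise Lefschetz isomorphism, which is the convention implicit in \cite{DPS01}. None of the subsequent arguments depend on the constant, since only the inequality $\|\dbar v\|^2\le \|\dbar u\|^2+\|\dbar^*_h u\|^2+q\varepsilon\|u\|^2$ is ever used.
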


Now, $X$ denotes a compact K\"ahler manifold equipped with a K\"ahler
metric~$\omega$, and $(L,h)$ a pseudoeffective line bundle on~$X$. 
To fix the ideas, we first indicate the~proof in the
much simpler case when $(L,h)$ has a smooth metric $h$ (so that
$\cI(h)=\cO_X$), and then treat the general case (although it is not really used in the proof of the general case).
\myparagraph{}

Let $\{\beta\}\in H^q(X,\Omega^n_X\otimes L)$ be an arbitrary cohomology class. By standard Hodge theory, $\{\beta\}$ can be represented by a smooth
harmonic $(0,q)$-form $\beta$ with values in $\Omega^n_X\otimes L$. We can
also view $\beta$ as a $(n,q)$-form with values in $L$. The pointwise
Lefschetz isomorphism produces a unique $(n-q,0)$-form $\alpha$ such
that $\beta=\omega^q\wedge\alpha$. Proposition 1 
then yields
$$
\Vert\dbar\alpha\Vert^2+\int_X\sum_{I,J}\Big(\sum_{j\in J}\lambda_j\Big)
|\alpha_{IJ}|^2=\Vert\dbar\beta\Vert^2+\Vert\dbar^*_h \beta\Vert^2=0,
$$
and the curvature eigenvalues $\lambda_j$ are nonnegative by our assumption.
Hence $\dbar\alpha=0$ and $\{\alpha\}\in H^0(X,\Omega^{n-q}_X\otimes L)$
is mapped to $\{\beta\}$ by $\Phi^q_{\omega,h}=\omega^q\wedge \bullet$. 
\myparagraph{}
In this case, the proof of the closedness property of sections involves the identity
$$\int_X \{ \d_h v, \d_h v \}_h \wedge \omega^{q-1} =\int_X (\d \{ v, \d_h v \}_h - (-1)^{\deg v}\{ v, \dbar \d_h v \}_h)\wedge \omega^{q-1}.$$
Using the holomorphicity of $v$, the fact that $(X, \omega)$ is K\"ahler and the Stokes formula, we get
$$\RHS=(-1)^{\deg v+1}\int_X \{  v, -\d_h\dbar v+i\Theta_{L,h} v\}_h \wedge \omega^{q-1}=(-1)^{\deg v+1}\int_X \{  v, i\Theta_{L,h} v\}_h \wedge \omega^{q-1}$$
$$=-\int_X i\Theta_{L,h}\wedge \{  v,  v\}_h \wedge \omega^{q-1} \leq 0.$$
In the above calculation, we have used the formula
$$\d_h \dbar + \dbar \d_h = i\Theta_{L,h} \wedge \bullet.$$
The last inequality uses the curvature assumption. Therefore we have
$$\int_X \{ \d_h v, \d_h v \}_h \wedge \omega^{q-1} =0,$$
and this implies $\d_h v=0$.
\myparagraph{}
Let us return to the case of an arbitrary plurisubharmonic weight $\varphi$. We will need the following ``equisingular'' approximation of psh functions; here, equisingularity is to be understood in the sense that the multiplier ideal sheaves are preserved. A~proof can be found in \cite{DPS01} or \cite{psef}.
\begin{mythm}
Let $T=\alpha+dd^c\varphi$ be a closed
$(1,1)$-current on a compact Hermitian manifold $(X,\omega)$, where
$\alpha$ is a smooth closed $(1,1)$-form and $\varphi$ a quasi-psh
function. Let $\gamma$ be a continuous real $(1,1)$-form such that
$T\ge\gamma$. Then one can write
\hbox{$\varphi=\lim_{m\to+\infty}\widetilde\varphi_m$} where
\vskip3pt
\item{\rm (a)} $\widetilde\varphi_m$ is smooth in the complement $X\setminus Z_m$
of an analytic set $Z_m\subset X\,;$
\vskip3pt
\item{\rm (b)} $\{\widetilde\varphi_m\}$ is a non-increasing sequence, and $Z_m\subset Z_{m+1}$ for all~$m\,;$
\vskip3pt
\item{\rm (c)} $\int_X(e^{-\varphi}-e^{-\widetilde\varphi_m})dV_\omega$
is finite for every $m$ and converges to $0$ as $m\to+\infty\,;$
\vskip3pt
\item{\rm (d)} $($``equisingularity''$)$ $\cI(\widetilde\varphi_m)=\cI(\varphi)$ for all $m$
$\,;$
\vskip3pt
\item{\rm (e)} $T_m=\alpha+dd^c\widetilde\varphi_m$ satisfies $T_m\ge
\gamma-\varepsilon_m\omega$, where $\lim_{m\to+\infty}\varepsilon_m=0$.
\end{mythm}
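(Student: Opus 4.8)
The statement is the Demailly--Peternell--Schneider \emph{equisingular} regularization theorem, and the plan is to produce the $\widetilde\varphi_m$ by a Bergman-kernel type construction --- carried out, crucially, on the data of the multiplier ideal sheaves $\cI(m\varphi)$ and not on the naive weighted $L^2$ spaces $L^2(e^{-2m\varphi})$, which would regularize $\varphi$ but strictly decrease its Lelong numbers and thereby destroy~(d). First I would localize: fix a finite cover of $X$ by coordinate balls $\Omega_j$ on which $\alpha=dd^c\psi_j$ with $\psi_j$ smooth, and choose balls $\Omega_j''\Subset\Omega_j'\Subset\Omega_j$ still covering $X$. Since $\gamma$ is only continuous I would replace it from the start by a slightly smaller smooth closed $(1,1)$-form $\gamma'_m\le\gamma$ with $\gamma-\gamma'_m\le\varepsilon'_m\,\omega$ and $\varepsilon'_m\to0$, writing $\gamma'_m=dd^c g_{j,m}$ locally with $g_{j,m}$ smooth; then $u_{j,m}:=\varphi+\psi_j-g_{j,m}$ is plurisubharmonic on $\Omega_j$ because $T\ge\gamma\ge\gamma'_m$.

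On each $\Omega_j$ I would take an orthonormal basis $(\sigma_{j,m,k})_k$ of the Hilbert space $\mathcal H_{j,m}=\{f\ \text{holomorphic on }\Omega_j:\int_{\Omega_j}|f|^2e^{-2m u_{j,m}}\,dV<\infty\}$ and set $\phi_{j,m}:=\tfrac1{2m}\log\sum_k|\sigma_{j,m,k}|^2$, the $\tfrac1{2m}\log$ of the diagonal Bergman kernel. This is plurisubharmonic and is smooth on $\Omega_j'$ away from the common zero set $Z_{j,m}$ of the $\sigma_{j,m,k}$, an analytic set (by the Noether property the ideal they generate is locally finitely generated); moreover $\mathcal H_{j,m+1}\subset\mathcal H_{j,m}$ since $u_{j,m}$ is bounded above on $\Omega_j$, so $Z_{j,m}\subset Z_{j,m+1}$. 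I would then glue the functions $\widehat\phi_{j,m}:=\phi_{j,m}-\psi_j+g_{j,m}$ into a global $\widetilde\varphi_m$ by Demailly's regularized maximum over a cover subordinate to the $\Omega_j'$, with small additive constants; the result is smooth off an analytic set $Z_m\subset Z_{m+1}$ --- this is (a) and the nesting in (b) --- and satisfies $\alpha+dd^c\widetilde\varphi_m=\gamma'_m+dd^c\phi_{j,m}\ge\gamma'_m\ge\gamma-\varepsilon'_m\,\omega$ locally, which is (e).

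The remaining control comes from two classical estimates. Applying the Ohsawa--Takegoshi extension theorem to extend the constant~$1$ from a point $x\in\Omega_j'$ with a uniform $L^2$ bound gives $f\in\mathcal H_{j,m}$ with $f(x)=1$ and $\|f\|^2\le C\,e^{2m u_{j,m}(x)}$, $C$ independent of $x,m$; hence $\phi_{j,m}\ge u_{j,m}-C/2m$ and $\widetilde\varphi_m\ge\varphi-C'/m$. This yields at once $\cI(\varphi)\subset\cI(\widetilde\varphi_m)$, and, after adding a constant $O(1/m)$ so that $\widetilde\varphi_m\ge\varphi$, it yields (c): then $e^{-\widetilde\varphi_m}$ increases pointwise to $e^{-\varphi}$, the differences $e^{-\varphi}-e^{-\widetilde\varphi_m}\ge0$ decrease to $0$ pointwise, and once one knows $\int_X(e^{-\varphi}-e^{-\widetilde\varphi_1})dV_\omega<\infty$ (from the level-$1$ lower bound) monotone convergence finishes it. The sub-mean-value inequality on balls of radius $r_m\to0$ gives conversely $\phi_{j,m}(x)\le\sup_{B(x,r_m)}u_{j,m}+C|\log r_m|/m$, and with $r_m=1/m$, together with upper semicontinuity, this gives $\widetilde\varphi_m\to\varphi$ pointwise and in $L^1$. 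To make $\{\widetilde\varphi_m\}$ itself non-increasing (the rest of (b)) I would pass to the subsequence $m=2^\nu$, exploiting the nesting of the $\mathcal H_{j,m}$ and a Fekete-type submultiplicativity of $\tfrac1{2m}\log K_m$, absorb harmless additive constants tending to~$0$, and re-index.

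The main obstacle is equisingularity (d), that is, the reverse inclusion $\cI(\widetilde\varphi_m)\subset\cI(\varphi)$. It cannot come from a pointwise estimate --- the upper bound degenerates near $Z_m$, and the plain Bergman regularization really does lose Lelong numbers --- so one must use that $\widetilde\varphi_m$ is, up to smooth terms, $\tfrac1{2m}\log$ of a concrete $L^2$ Bergman kernel tied to $\cI(m\varphi)$, combined with the subadditivity theorem for multiplier ideals, $\cI(m\varphi)\subset\cI(\varphi)^m$, and a second use of Ohsawa--Takegoshi (extending a germ $h$ with $\int|h|^2e^{-\widetilde\varphi_m}<\infty$, suitably twisted by the $\sigma_{j,m,k}$, to deduce $\int|h|^2e^{-\varphi}<\infty$). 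Making this step work, and making it compatible with the gluing and with the passage to a subsequence, is the delicate heart of the matter; I would carry it out following \cite{DPS01} and \cite{psef}, where it is done in detail, the remaining items being bookkeeping.
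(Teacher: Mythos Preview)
The paper itself does not prove this result; it is quoted from \cite{DPS01} and \cite{psef}. Your sketch follows the broad strategy of those references---Bergman-kernel approximation on local balls, the Ohsawa--Takegoshi lower bound, the sub-mean-value upper bound, and gluing by regularized maxima---and your outline for (a), (b) and (e) is essentially right.

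There is, however, a real gap in your treatment of (c). You assert that $\int_X(e^{-\varphi}-e^{-\widetilde\varphi_1})\,dV_\omega<\infty$ follows ``from the level-$1$ lower bound'', but the only lower bound available is $\widetilde\varphi_1\ge\varphi$ (after your additive normalization), which yields merely $0\le e^{-\varphi}-e^{-\widetilde\varphi_1}\le e^{-\varphi}$. Since $e^{-\varphi}$ is in general \emph{not} integrable---and that is precisely the interesting case, as otherwise $\cI(\varphi)=\cO_X$ and there is nothing to prove---this gives no starting point for your monotone-convergence argument. Property (c) is in fact the technical crux of the equisingular approximation and cannot be read off from the pointwise Ohsawa--Takegoshi bound alone; in \cite{DPS01} it requires a more delicate use of the $L^2$ structure of the Bergman data than the bare inequality $\widetilde\varphi_m\ge\varphi-C/m$.

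You have also inverted the dependence between (c) and (d). Once (c) holds, (d) is immediate: a germ $f\in\cI(\widetilde\varphi_m)_x$ is holomorphic, hence $|f|^2$ is bounded on a small neighbourhood $U$ of $x$, and then
\[
\int_U|f|^2e^{-\varphi}\,dV_\omega
=\int_U|f|^2e^{-\widetilde\varphi_m}\,dV_\omega
+\int_U|f|^2\bigl(e^{-\varphi}-e^{-\widetilde\varphi_m}\bigr)\,dV_\omega<\infty,
\]
the second integral being dominated by $\sup_U|f|^2$ times the quantity in (c). So your proposed route to (d) via subadditivity of multiplier ideals and a ``second Ohsawa--Takegoshi'' is not needed; the difficulty you rightly sense lives entirely in (c), and your sketch does not address it. Note, incidentally, that your own outline is internally inconsistent on this point: you declare (c) to be easy and (d) to be ``the delicate heart of the matter'', yet (c) trivially implies (d).
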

Fix $\varepsilon=\varepsilon_\nu$ and let $h_\varepsilon=h_{\varepsilon_\nu}$
be an approximation of~$h$, such that $h_\varepsilon$ is smooth on
$X\setminus Z_\varepsilon$ ($Z_\varepsilon$ being an analytic subset of $X$),
$\Theta_{L,h_\varepsilon}\ge -\varepsilon\omega$,
$h_\varepsilon\le h$ and $\cI(h_\varepsilon)=\cI(h)$. As above we fix a reference smooth metric $h_0$ on $L$. We denote by $\beta$ the curvature form of $h_0$ and $h_{\varepsilon}=h_0 e^{- \varphi_{\varepsilon}}$ ($\varphi_{\varepsilon}$ is hence a global quasi-psh function on~$X$).
The existence of a such metric is guaranteed
by Theorem 5. Now, we can find a family
$$
\omega_{\varepsilon,\delta}=\omega+\delta(\ii\ddbar\psi_\varepsilon+\omega),
\qquad \delta>0
$$
of {\it complete K\"ahler} metrics on $X\setminus Z_\varepsilon$, where
$\psi_\varepsilon$ is a quasi-psh function on $X$ with analytic singualarity with
$\psi_\varepsilon=-\infty$ on $Z_\varepsilon$,
$\psi_\varepsilon$ smooth on $X\setminus Z_\varepsilon$ and $\ii\ddbar
\psi_\varepsilon+\omega\ge 0$ (see e.g.\ \cite{Dem82}, Th\'eor\`eme~1.5).
By construction, $\omega_{\varepsilon,\delta}\ge\omega$ and
$\lim_{\delta\to 0}\omega_{\varepsilon,\delta}=\omega$.
We look at the $L^2$ Dolbeault complex $K^\bullet_{\varepsilon,\delta}$
of $(n,\bullet)$-forms on $X\setminus Z_\varepsilon$, where the $L^2$ norms are
induced by $\omega_{\varepsilon,\delta}$ on differential forms and by
$h_\varepsilon$ on elements in~$L$. Specifically
$$
K^q_{\varepsilon,\delta}=\Big\{u{:}X\setminus Z_\varepsilon{\to}\Lambda^{n,q}
T^*_X\otimes L;\int_{X\setminus Z_\varepsilon}\kern-15pt
(|u|^2_{\Lambda^{n,q}\omega_{\varepsilon,\delta}
\otimes h_\varepsilon}+|\dbar u|^2_{\Lambda^{n,q+1}\omega_{\varepsilon,\delta}
\otimes h_\varepsilon})dV_{\omega_{\varepsilon,\delta}}<\infty\Big\}.
$$
Let $\cK^q_{\varepsilon,\delta}$ be the corresponding sheaf of germs
of locally $L^2$ sections on $X$ (the local $L^2$ condition
should hold on $X$, not only on $X\setminus Z_\varepsilon\,$!). Then,
for all $\varepsilon>0$ and $\delta\ge 0$,
$(\cK^q_{\varepsilon,\delta},\dbar)$ is a resolution of the sheaf
$\Omega^n_X\otimes L\otimes\cI(h_\varepsilon)=
\Omega^n_X\otimes L\otimes\cI(h)$. This is
because $L^2$ estimates hold locally on small Stein open sets, and the
$L^2$ condition on $X\setminus Z_\varepsilon$ forces holomorphic sections
to extend across~$Z_\varepsilon$ (\cite{Dem82}, Lemma 6.9).
\myparagraph{}
Let $\{\beta\}\in H^q(X,\Omega^n_X\otimes L\otimes\cI(h))$ be a
cohomology class represented by a smooth form with values in
$\Omega^n_X\otimes L\otimes\cI(h)$.
Then
$$
\Vert\beta\Vert_{\varepsilon,\delta}^2\le \Vert\beta\Vert^2=
\int_X|\beta|^2_{\Lambda^{n,q}\omega\otimes h}dV_\omega<+\infty.
$$
The reason is that $|\beta|^2_{\Lambda^{n,q}\omega\otimes h}dV_\omega$
decreases as $\omega$ increases, see e.g. \cite{Dem82}, Lemma 3.2. Now, $\beta$ is a $\dbar$-closed form in the Hilbert space
defined by $\omega_{\varepsilon,\delta}$ on $X\setminus Z_\varepsilon$ and for $\delta >0$, the K\"ahler metric is complete on $X\setminus Z_\varepsilon$, so there
is a $\omega_{\varepsilon,\delta}$-harmonic form
$u_{\varepsilon,\delta}$ in the same cohomology class
as $\beta$, such that
$$\Vert u_{\varepsilon,\delta}\Vert_{\varepsilon,\delta}\le
\Vert\beta\Vert_{\varepsilon,\delta}.$$
Let $v_{\varepsilon,\delta}$ be the unique $(n-q,0)$-form such that
$u_{\varepsilon,\delta}=v_{\varepsilon,\delta}\wedge
\omega_{\varepsilon,\delta}^q$ ($v_{\varepsilon,\delta}$ exists by the
pointwise Lefschetz isomorphism). Then
$$
\Vert v_{\varepsilon,\delta}\Vert_{\varepsilon,\delta}=
\Vert u_{\varepsilon,\delta}\Vert_{\varepsilon,\delta}\le
\Vert\beta\Vert_{\varepsilon,\delta}\le\Vert\beta\Vert.
$$
As $\sum_{j\in J}\lambda_j\ge -q\varepsilon$ by the assumption on
$\Theta_{L,h_\varepsilon}$, the Bochner formula for $X \setminus Z_{\varepsilon}$ yields
$$
\Vert\dbar v_{\varepsilon,\delta}\Vert_{\varepsilon,\delta}^2\le
q\varepsilon\Vert u_{\varepsilon,\delta}\Vert_{\varepsilon,\delta}^2
\le q\varepsilon\Vert\beta\Vert^2.
$$
But since $Z_{\varepsilon}$ is an analytic set, the integral can also be seen taken on $X$; In the following, we use it abusively.
These uniform bounds imply that there are subsequences $u_{\varepsilon,
\delta_\nu}$ and $v_{\varepsilon,\delta_\nu}$ with \hbox{$\delta_\nu\to 0$},
possessing weak-$L^2$ limits $u_\varepsilon=
\lim_{\nu\to+\infty}u_{\varepsilon,\delta_\nu}$
and $v_\varepsilon=\lim_{\nu\to+\infty}v_{\varepsilon,\delta_\nu}$.
The limit $v_\varepsilon=
\lim_{\nu\to+\infty}v_{\varepsilon,\delta_\nu}$ is with respect to
$L^2(\omega)=L^2(\omega_{\varepsilon,0})$. To check this, notice that
in bidegree $(n-q,0)$, the space $L^2(\omega)$ has the weakest topology
of all spaces~$L^2(\omega_{\varepsilon,\delta})$; indeed, an easy calculation
made in \cite{Dem82}, Lemma 3.2 yields
$$
|f|^2_{\Lambda^{n-q,0}\omega\otimes h}dV_\omega\le
|f|^2_{\Lambda^{n-q,0}\omega_{\varepsilon,\delta}\otimes h}
dV_{\omega_{\varepsilon,\delta}}\qquad
\hbox{if $f$ is of type $(n-q,0)$}.
$$
On the other hand, the limit
$u_\varepsilon=\lim_{\nu\to+\infty}u_{\varepsilon,\delta_\nu}$
takes place in all spaces $L^2(\omega_{\varepsilon,\delta})$, $\delta>0$,
since the topology gets stronger and stronger as $\delta\downarrow 0$
[$\,$possibly not in $L^2(\omega)$, though, because in bidegree $(n,q)$
the topology of $L^2(\omega)$ might be strictly stronger than that
of all spaces $L^2(\omega_{\varepsilon,\delta})\,$]. For fixed $\delta >0$, for any $\delta' < \delta$,we have 
$$\Vert u_{\varepsilon,\delta'}\Vert_{\varepsilon,\delta} \leq \Vert u_{\varepsilon,\delta'}\Vert_{\varepsilon,\delta'} \leq \Vert \beta \Vert$$
$$\Vert u_{\varepsilon}\Vert_{\varepsilon,\delta} \leq \liminf_{\delta' \to 0} \Vert u_{\varepsilon,\delta'}\Vert_{\varepsilon,\delta} \leq \Vert \beta \Vert$$
By Lebesgue's monotone convergence theorem, $u_{\varepsilon}$ is $L^2(\omega_{\varepsilon,\delta} \otimes h_{\varepsilon})$ bounded. The above estimates yield
$$
\Vert v_\varepsilon\Vert^2_{\varepsilon,0}=
\int_X|v_\varepsilon|^2_{\Lambda^{n-q,0}\omega\otimes h_\varepsilon}
dV_\omega\le\Vert\beta\Vert^2,$$
$$\Vert \dbar v_\varepsilon\Vert^2_{\varepsilon,0}\le q\varepsilon
\Vert\beta\Vert^2_{\varepsilon,0}=q\varepsilon \Vert\beta\Vert^2,$$
$$u_\varepsilon=\omega^q\wedge v_\varepsilon\equiv\beta
\qquad\hbox{in}~~H^q(X,\Omega^n_X\otimes L\otimes\cI(h_\varepsilon)).
$$
The last equality can be checked via the De Rham-Weil isomorphism, by using
the fact that the map
$\alpha\mapsto\{\alpha\}$ from the cocycle space
$Z^q(\cK^\bullet_{\varepsilon,\delta})$ equipped with its $L^2$ topology,
into $H^q(X,\Omega^n_X\otimes L\otimes\cI(h))$ equipped with its
finite vector space topology, is continuous. 
\myparagraph{}
For the closedness property, we want to control the $L^1_{\loc}$ norm of the covariant derivative with respect to the Lebesgue measure, which is well defined on $X$ since the metric is smooth outside an analytic set and the section is locally $L^2$ with respect to the metric. For any smooth $(n-q,0)$-form $v$ with compact support in $X \setminus Z_{\varepsilon}$, we can apply the Stokes formula to get
$$\int_X \{ \d_{h_{\varepsilon}} v, \d_{h_{\varepsilon}} v \}_{h_{\varepsilon}} \wedge \omega_{\varepsilon,\delta}^{q-1} =(-1)^{\deg v+1}\int_X \{  v, -\d_{h_{\varepsilon}}\dbar v+i\Theta_{L,h_{\varepsilon}} v\}_{h_{\varepsilon}} \wedge \omega_{\varepsilon,\delta}^{q-1}$$
$$=\int_X  (\dbar \{  v, \dbar v\}_{h_{\varepsilon}}-\{  \dbar v, \dbar v\}_{h_{\varepsilon}}-i\Theta_{L,h_{\varepsilon}} \wedge \{v,v\}_{h_{\varepsilon}} )\wedge \omega_{\varepsilon,\delta}^{q-1}$$
$$=\int_X  (-\{  \dbar v, \dbar v\}_{h_{\varepsilon}}-i\Theta_{L,h_{\varepsilon}} \wedge \{v,v\}_{h_{\varepsilon}} )\wedge \omega_{\varepsilon,\delta}^{q-1}.$$
We want to apply this identity to $v=v_{\delta,\varepsilon}$ that does not necessarily have compact support in $X \setminus Z_{\varepsilon}$.
However, the metric $\omega_{\varepsilon,\delta} \otimes h_{\varepsilon}$ is smooth and complete on $X\setminus Z_\varepsilon$, and this will allow us to extend the identity to $v=v_{\varepsilon,\delta}$. In fact, there exists a sequence of smooth forms $v_{\varepsilon,\delta, \nu}$ with compact support on $X\setminus Z_\varepsilon$ obtained by truncating $v_{\varepsilon,\delta}$ and by taking the convolution with a regularizing kernel, in such a way that $v_{\varepsilon,\delta,\nu} \to v_{\varepsilon,\delta}$ in $L^2(\omega_{\varepsilon,\delta} \otimes h_{\varepsilon})$ (and therefore in $L^2(\omega \otimes h_0)$ as well). For simplicity of notation, we put $\d_\varepsilon=\d_{h_\varepsilon}$ and denote by $\d_{\varepsilon,\delta}^*$ its dual with respect to the metric $\omega_{\varepsilon,\delta}\otimes h_{\varepsilon}$ (the latter operator depends on $\delta$, since the Hodge $*$ operator depends on the K\"ahler metric). By taking $v=v_{\varepsilon,\delta,\nu}$ in the above identity, neglecting the non positive term involving $\dbar v$ and using the curvature condition, we obtain
$$\Vert \d_{\varepsilon} v_{\varepsilon,\delta,\nu}\Vert^2_{\varepsilon,\delta}\le q\varepsilon
\Vert v_{\varepsilon,\delta,\nu} \Vert^2_{\varepsilon,\delta} .$$
Let us put $C=e^{max_X (\varphi_{\varepsilon_1})}$ (we have $C < \infty$ as $X$ is compact). Then by using $\omega_{\varepsilon,\delta} \geq \omega$, $h_{\varepsilon} \geq \frac{1}{C} h_0$, we get
$$\Vert \d_{\varepsilon} v_{\varepsilon,\delta,\nu}\Vert^2_{L^2(\omega \otimes h_0)}\leq C \Vert \d_{\varepsilon} v_{\varepsilon,\delta,\nu}\Vert^2_{\varepsilon,\delta},$$
By the Cauchy-Schwarz inequality and the fact that $X$ is compact and that the metrics $\omega$, $h_0$ are smooth, we find
$$\Vert \d_{\varepsilon} v_{\varepsilon,\delta,\nu}\Vert_{L^1(\omega \otimes h_0)}\leq C'\Vert \d_{\varepsilon} v_{\varepsilon,\delta,\nu}\Vert_{L^2(\omega \otimes h_0)}^{},$$
Since the covariant derivative is a closed operator and $v_{\varepsilon,\delta,\nu} \to v_{\varepsilon,\delta}$, $v_{\varepsilon,\delta} \to v_{\varepsilon}$ in $L^2(\omega_{\varepsilon,0} \otimes h_{\varepsilon})$, we conclude that
$$\Vert \d_{\varepsilon} v_{\varepsilon,\delta}\Vert_{L^1(\omega \otimes h_0)} \leq C'' \sqrt{q\varepsilon} \Vert\beta\Vert,$$
$$\Vert \d_{\varepsilon} v_{\varepsilon}\Vert_{L^1(\omega \otimes h_0)} \leq C'' \sqrt{q\varepsilon} \Vert\beta\Vert.$$
\myparagraph{}
Again, by arguing in a fixed Hilbert space $L^2(h_{\varepsilon_0})$ (since $\omega_{\varepsilon}=\omega$, the notation $L^2(h_{\varepsilon_0})$ will be used for fixed $\varepsilon_0 >0$), we find $L^2$ convergent subsequences $u_\varepsilon\to u$,
$v_\varepsilon\to v$ as $\varepsilon\to 0$, and in this way get
$\dbar v=0$ and
$$
\Vert v\Vert^2\le \Vert \beta\Vert^2,$$
$$u=\omega^q\wedge v\equiv \beta
\qquad\hbox{in}~~H^q(X,\Omega^n_X\otimes L\otimes\cI(h)).
$$
By closedness of the covariant derivative and by continuity of the injection $L^2(\omega \otimes h_0)\hookrightarrow L^1(\omega \otimes h_0)$ on the compact manifold $X$, we obtain
$$\Vert \d_{\varepsilon_0} v\Vert^2_{L^1(\omega \otimes h_0)} \leq C q\varepsilon_0 \Vert\beta\Vert^2.$$
As $\varphi=\lim_{\varepsilon \to 0}\varphi_{\varepsilon} $ and $ \d \varphi=\lim_{  \varepsilon \to 0}\d \varphi_{\varepsilon}$ in $L^1_{\loc}(h_0)$, and as we haven proven that $v$ is in fact holomorphic, by the continuity of the covariant derivative operator, we infer that $\d \varphi \wedge v=\lim_{\varepsilon \to 0} \d \varphi_{\varepsilon} \wedge v$ in the sense of distributions, and we have $\Vert \d_{h} v\Vert^2_{L^1(\omega \otimes h_0)}=0$, which means that $\d_{h} v =0$.
The closedness property is proved along the same lines.
\section{Foliation induced by sections}
We show that the closedness property of the holomorphic section provided by the hard Lefschetz theorem induces a foliation on $X$.
Here foliation means that there exists an irreducible analytic set $V$ of the total space $T_X$ such that for any $x \in X$, $V_x:=V \cap T_X$ is a complex vector space and the section sheaf $\cO(V) \subset \cO(T_X)$ is closed under the Lie bracket. It is equivalent to say that $\cO(V)$ is closed under the Lie bracket and that $\cO(T_X)/\cO(V)$ is torsion free.
 
We consider $v \in H^0(X, \Omega_X^{n-q} \otimes L \otimes \cI(h)), q \geq 1$ a parallel section with respect to the singular metric $h$.
In particular a section constructed by the hard Lefschetz theorem is such a section.  The interior product with $v$ gives an $\cO_X$-morphism (which is well defined on the whole of $X$ )
$$F_v:T_X \to \Omega_X^{n-q-1} \otimes L$$
$$X \mapsto \iota_X v.$$
We prove in the following that the kernel of $F_v$ defines a coherent subsheaf of $\cO(T_X)$ 
whose germs are closed under Lie brackets; this uses of course the closedness property.
Since the closedness under Lie brackets is a local property, we can take an open set $ U$ such that there exists a nowhere vanishing local generator  $s_L$ of the line bundle $L$ on $U$, and we verify the closedness of the Lie bracket on~$U$.
On $U$, $v=u \otimes s_L$ for some $u \in H^0(U,\Omega_X^{n-q})$. Denote by $X,Y$ two local tangent vector fields in $\ker F_v \subset \cO(T_X)$ defined on $U$. 
We have
$$0=d_h(u \otimes s_L)(X,Y,\bullet)$$
$$=(du \otimes s_L +(-1)^{\deg u}u \wedge d_h s_L)(X,Y,\bullet)$$
$$=du(X,Y,\bullet) \otimes s_L +(-1)^{\deg u}u \wedge d_h s_L(X,Y,\bullet)$$
$$=du(X,Y,\bullet) \otimes s_L +(-1)^{\deg u}[u(X,\bullet) d_h s_L(Y)-u(Y,\bullet)d_h s_L(X)+...]$$
$$=du(X,Y,\bullet) \otimes s_L$$
The above dots ... mean terms of the form $\pm u(X,Y,\bullet)d_h s_L(\bullet)$. The last equality uses of course the fact that $X,Y \in \ker F_v$.
Observe that $d_h(u \otimes s_L)$ is only almost everywhere defined (instead of pointwise defined).
The above equalities are calculated in the sense of currents.
\\
For any $X_0,...,X_{n-q}$ tangent vector fields of $U$ such that $X_0=X,X_1=Y$, we have
$$0=du(X_0,...,X_{n-q})=\sum_{i=0}^{n-q} (-1)^i X_i[u(X_0,...,\hat{X_i},...,X_k)]$$
$$+\sum_{0 \leq i <j \leq n-q} (-1)^{i+j}u([X_i,X_j],X_0,...,\hat{X_i},...,\hat{X_j},...,X_{n-q})$$
$$=-u([X,Y],X_2,...,X_{n-q}),$$
which means that $[X,Y] \in \ker(F_v)$.


Now we show that $\ker(F_v)$ is locally free over a Zariski open set.
For any $z \in X$, take an open neighborhood $V$ of $z$ such that $L|_V$ is trivial and on this open set $v(z)=\sum_{|I|=n-q} v_I(z)dz_I$ where $v_I \in \Gamma(V,\cO_X)$. 
Consider $\xi=\sum \xi_j(z) \frac{\d}{\d z_j}$ a local tangent vector field on $V$. For any multiindex $I$ and any $j\in I$, we write it in the form $I=(j,I'_j)$. Then $\xi \in \ker(F_v)$ if and only if $\sum_{j,I,|I|=n-q-1} \xi_j u_{(j,I)}dz_I=0$, i.e.\ if and only if for any $I,|I|=n-q-1$, $\sum_j \xi_j(z) u_{(j,I)}(z)=0$. This gives a local system of analytic equations defining $\ker(F_v)$. In particular, we see that $\ker(F_v)$ is 
locally free over the open set where the holomorphic linear system $\sum_j \xi_j(z) u_{(j,I)}(z)=0$ ($|I|=n-q-1$) has its generic rank.
In other words, $\ker(F_v)$ is locally free over the open set where the holomorphic matrix $( u_{(j,I)}(z)=0)_ {|I|=n-q-1}$ has its generic rank.
Then we have a regular foliation on this Zariski open set by the Frobenius theorem.
In particular, $\ker(F_v)$ is a holomorphic subbundle of the tangent bundle  on this Zariski open set.

Let $U'$ be the Zariski dense open set of $X$ such that $\ker(F_v)|_{U'}$ is a homolomorphic subbundle of the tangent bundle.
Define $V$ to be the Zariski closure of $\ker(F_v)|_{U'}$ in the total space $T_X$ of the tangent bundle. It is clear that $V$ is an irreducible analytic subset of $T_X$. In fact, $\ker(F_v)|_{U'}$ is contained in the regular part of $V$ as a complex space, thus the regular part $V$ is connected.
In~particular, $V$ has only one global irreducible component.

Observe that $\ker(F_v)$ coincides with $\cO(V)$ over $U'$.
For any local tangent fields $X,Y$, the image of $[X,Y]|_{U'}$ is contained in $V$.
Since $V$ is Zariski closed, the same holds for the image of $[X,Y]$ by passing to limit. In other words, $\cO(V)$ is closed under the Lie bracket,
and $\cO(V)$ defines a foliation on $X$.

To be more self-contained, we verify here that $T_X/ \cO(V)$ is torsion free.
Assume that $u \in \cO(T_X)_z$ and that $f \in \cO_{X,z}$ is such that $f \neq 0$, $fu \in \cO(V)_z$. We have to show that $u \in \cO(V)_z$.
Assume that $V$ is locally defined by $g_i(z, \xi)$.
By definition, $g_i(z, f(z)u_j(z))=0$ for every $i$, where $u_j$ is the components of $u$ in some local trivialization of $T_X$.
Since $V_x = V \cap T_{X,z}$ is a vector space, we have that
$g_i(z, u_j(z))=0$ for every $i$, which indeed means that $u \in \cO(V)_z $.

We can also reformulate our conclusion in the following form: denote by $r$ the generic rank of $\ker(F_v)$, then there is a meromorphic morphism
$$X \dashrightarrow\Gr(T_X,r)$$
$$z \mapsto \ker(F_{v,z})$$
where $\Gr(TX,r)$ is the
Grassmannian bundle of $r$-dimensional subspaces of $T_X$.

Let us observe that the foliation property only holds for the parallel sections.
In general, a non trivial section $v \in H^0(X, \Omega_X^{n-q} \otimes L), q \geq 1$, does not necessarily induce a foliation. We give below a concrete example of the non-integrability of $\ker(F_v)$ for such a section~$v$, and
thank Professor A.~H\"oring for pointing out the example.
It is interesting at this point to compare the situation with the following result proved in \cite{Dem02}: if $L$ is a psef line bundle over a compact K\"ahler manifold $X$ and $0\leq q\leq n=\dim X$, then for every
nonzero holomorphic section
$v \in H^0(X, \Omega_X^{q} \otimes L^{-1})$, the kernel $\ker(F_v)$
automatically defines a foliation on~$X$.

The example pointed out by A.~H\"oring first appeared in the paper
of Beauville \cite{Bea00}.
Let $A$ be an abelian surface and $X=A \times \P^1$.
Let $(U, V )$ be a basis of $H^0 (A, T_A )$ ,
and let $S, T$ be two vector fields on $\P^1$ which do not commute.
For example, in the homogenous coordinates $[w_1:w_2]$ of $\P^1$, we can take
$$S=w_2 \frac{\d}{\d w_1},~~ T=w_1 \frac{\d}{\d w_2}.$$
Then the vector fields $U + S$
and $V + T$ span a rank 2 subbundle $\Sigma$ of $T_X$.
Since $U+S$, $V+T$ have no common root, $\Sigma \cong \cO_X^{\oplus 2}$.
In particular, $\Sigma$ is not integrable, i.e.\
$\Sigma$ is not closed under the Lie bracket of vector fields.
Consider the short exact sequence of vector bundles
$$0 \to \Sigma \to T_X \to T_X/\Sigma \to 0.$$
We deduce that $T_X/\Sigma \cong -K_X$.
The quotient map $T_X \to T_X/\Sigma \cong -K_X$ induces by duality a vector bundle morphism $K_X \to \Omega^1_X$.
Thus we have a non trivial section $\eta_{
S,T} \in H^0(X, \Omega^1_X \otimes (-K_X))$.

To use the hard Lefschetz theorem, we take the following smooth metric on $-K_X$.
Denote by $\pi_1: X \to A$, $\pi_2: X \to \P^1$ the natural projections.
$-K_X = \pi_2^* \cO_{\P^1}(2)$.
Thus $-K_X$ is a semiample divisor.
By taking the smooth metric $h$ induced by a basis of global sections $\pi_2^* H^0(\P^1, \cO_{\P^1}(2))$ (or a base point free system of global sections), 
we get a smooth positive metric on $-K_X$.
In particular, the multiplier ideal sheaf associated to this metric is trivial. 
Moreover, by construction, the metric is real analytic.
In other words, we have a section $v \in H^0(X, \Omega_X^1 \otimes (-K_X))$ such that $\ker(F_v)$ is not integrable, while the metric is positive and
real analytic.

Fix any K\"ahler metric $\omega$ on $X$.
By the hard Lefschetz theorem, we have a surjective map
$$H^0(X, \Omega^1_X \otimes (-K_X)) \to H^2(X, \cO_X).$$
The image $\omega^2 \wedge \eta$ has a preimage $\eta$ which does not define a foliation on $X$.
\paragraph{}
Next, we derive by an explicit calculation what is the preimage given by the hard Lefschetz theorem, and show that this preimage indeed defines a foliation on $X$.
To simplify our exposition, we keep the same notation as above without assuming any longer that $S,T$ do not commute.
Fix $\omega_A$ a flat metric on $A$ such that $U,V$ form an orthonormal basis at each point.
Fix $\omega_{\P^1}$ a K\"ahler metric on $\P^1$ induced by the Fubini-Study metric and fix $\omega=\pi_1^* \omega_A+\pi^* \omega_{\P^1}$ a K\"ahler metric on $X$.
In particular, with this choice of metric, the induced metric $\wedge^3 \omega \otimes h$ on $K_X+(-K_X)$ is trivial.

We begin by showing that for any choice of $S,T$, the image $\omega^2 \wedge \eta_{S,T}$ is the same.
To verify this claim, we use the following isomorphism of $\C$-vector spaces.
Notice that $H^2(X, \cO_X) \cong \pi^*_1 H^2(A, \cO_A) \cong \C$.
Fix some $x \in \P^1$.
Consider the morphism
$$\iota: H^2(X, \cO_X) \to \C,$$
$$\{u\} \mapsto \int_{A \times \{x\}} u \wedge i U^* \wedge V^*.$$
Here $u\in C_{(0,2)}^{\infty}(X)$ is a representative of $\{u\} \in H^2(X, \cO_X)$.
The morphism $\iota$ is surjective since a generator of $H^2(X, \cO_X)$ can be represented by $\pi_1^* (\overline{U}^* \wedge \overline{V}^*)$, whose image is equal to $\int_A \omega^2_A>0$.
Since both sides are isomorphic to $\C$, we have an isomorphism.

For any $x \in \P^1$, let $W$ be a local generator $T_{\P^1}$ with norm 1 with respect to $\omega_{\P^1}$.
In particular, locally $U,V,W$ form an orthonormal basis with respect to $\omega$ pointwise.
Assume that locally \hbox{$S=fW$} and $T=g W$.
There exists a $C^{\infty}$ splitting of the short exact sequence 
$0 \to \Sigma \to T_X \to T_X/\Sigma \to 0$
by 
$T_X \cong \Sigma \oplus T_X/\Sigma$
which is induced by $\omega$.
Locally, $T_X$ is spanned by orthogonal basis $fU+gV-W$, $U+fW$ and $V+gW$. 
With this identification, $\eta$ can be locally given by for any $\xi \in T_X$
$$\eta(\xi)=\frac{\langle \xi, fU+gV-W \rangle}{|fU+gV-W|^2}(fU+gV-W).$$
Thus $\eta$ is given by
$$(\frac{f}{1+f^2+g^2} U^*+\frac{g}{1+f^2+g^2} V^*-\frac{1}{1+f^2+g^2} W^*) \otimes (fU+gV-W).$$
The anticanonical line bundle $-K_X$ is locally generated by
$$(fU+gV-W) \wedge (U+fW) \wedge (V+gW)=-(1+f^2+g^2) U\wedge V \wedge W.$$
In other words, the identification of $\Sigma^{\perp} \cong T_X/\Sigma \cong -K_X$ means the identification of $fU+gV-W$ with $-(1+f^2+g^2) U\wedge V \wedge W$. 
Thus $\omega^2 \wedge \eta$ seen as a $C_{(0,2)}^{\infty}$ form is given by
$$f \overline{V}^* \wedge \overline{W}^*+g \overline{U}^* \wedge \overline{W}^*+ \overline{U}^* \wedge \overline{V}^*.$$
Using this expression, $\iota(\omega^2 \wedge \eta_{S,T})$ is the same for any $S,T$.
Since $\iota$ is an isomorphism of vector spaces, $\omega^2 \wedge \eta_{S,T}$ is independent of the choice of $S,T$.

In the following, we show that the section constructed in the hard Lefschetz theorem for $\omega^2 \wedge \eta_{S,T}$ is $\eta_{S,T}$ associated with $S=T=0$.
We remark that since the metric is smooth, we can directly use the result of \cite{Eno93} without employing the equisingular approximation of \cite{DPS01}.
In other words, the preimage is given by the pointwise Lefschetz isomorphism of the harmonic representative of an element in $H^2(
X, \cO_X)$.

We claim that a generator of $H^2(X, \cO_X)$ can be represented by the harmonic $(0,2)$-form $\overline{U}^* \wedge \overline{V}^*$.
The reason is as follows.
Since the metric is trivial on $\cO_X$, the covariant derivative coincides with the exterior derivative.
Since $U,V$ are global parallel holomorphic sections, $dU^*=dV^*=0$.
This implies in particular that
$\dbar(\overline{U}^* \wedge \overline{V}^* )=0$.
On the other hand, $ \overline{U}^* \wedge \overline{V}^*$ is independent of the choice of coordinate on $\P^1$.
To prove that $\dbar^*(\overline{U}^* \wedge \overline{V}^* )=0$, it is enough to make a calculation in a normal coordinate chart centered at $x$.
In other words, locally $\omega=i U^* \wedge \overline{U}^*+i V^* \wedge \overline{V}^*+i W^* \wedge \overline{W}^*$ with $dW(x)=0$.
(The existence of the normal coordinate chart is ensured by the assumption that $\omega$ is K\"ahler.)
Since  $\dbar^*=- * \d *$, we have $\dbar^*(\overline{U}^* \wedge \overline{V}^* )(x)=0$, as this form involves only the value $dW(x)$ at $x$.
By the pointwise Lefschetz isomorphism, the preimage of $\overline{U}^* \wedge \overline{V}^*$ in the hard Lefschetz theorem is given by $ U^* \wedge V^*$.
It defines a foliation of $T_X$ generated by $U,V$, which has leaves $A \times \{x\}$ ($x \in \P^1$).
\section{Counterexample to coherence}
In this section, we wonder whether it is possible to replace the multiplier ideal sheaf by its ``lower semicontinuous regularization'', i.e.\
$$\cI_-(\varphi):= \bigcap_{\delta>0} \cI((1-\delta)\varphi),$$
which could be thought of as some sort of limit $\lim_{\delta\to 0_+}\cI((1-\delta)\varphi)$. A priori, as an infinite intersection of ideal sheaves, this lower semicontinous regularization might not be coherent.
It contains certainly $\cI(\varphi)$ and can be different from it if 1 is a jumping coefficient of the multiplier ideal sheaf. 
In this section, we show by a counterexample that the above infinite intersection $\bigcap_{\delta>0} \cI((1-\delta)\varphi)$ need  not be coherent for arbitrary psh functions; hence some further conditions should be added to ensure coherence and possible applications to algebraic geometry, thanks to Serre's GAGA theorem \cite{Ser}.

\begin{myprop}Let $B$ be the ball of radius $\frac{1}{2}$ centered at 0 in $\C^2$, and consider the plurisubharmonic function
$$\varphi(z,w)=\log |z|+\sum_{k \geq 1} \varepsilon_k \log(|z|+|w-a_k|^{N_k})$$
where $a_k$ is any sequence converging to 0 smaller than $\frac{1}{2}$ and
$\varepsilon_k>0$ and $N_k \in \N^*$ are suitable numbers $($ to be determined
later$)$. Then $\varphi$ defines multiplier sheaves such that
the intersection ideal
$\bigcap_{\delta>0} \cI((1-\delta)\varphi)$ is not coherent.
\end{myprop}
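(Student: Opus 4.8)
The plan is to exhibit, for a suitable choice of parameters $\varepsilon_k \to 0$ and $N_k \to \infty$, a psh function of the stated form whose intersection ideal $\cI_-(\varphi) = \bigcap_{\delta>0}\cI((1-\delta)\varphi)$ fails to be coherent. The strategy is to analyze the multiplier ideal $\cI((1-\delta)\varphi)$ near each point $p_k = (0, a_k)$ separately. Near $p_k$, after discarding the terms $\log(|z| + |w-a_j|^{N_j})$ with $j \neq k$, which are bounded below near $p_k$ (since $a_j \neq a_k$ keeps $|w-a_j|$ bounded away from $0$), the singularity of $\varphi$ is essentially governed by $(1+\varepsilon_k)\log|z| + \varepsilon_k \log(|z|+|w-a_k|^{N_k})$, plus the contributions at infinity from the tail $\sum_{j\neq k}\varepsilon_j\log|z|$ near the $z=0$ axis. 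The key local computation is to determine, for this model singularity, the integrability of $|f|^2 e^{-(1-\delta)\varphi}$: a monomial $z^a (w-a_k)^b$ lies in the multiplier ideal at $p_k$ iff an inequality relating $a, b$ to $(1-\delta)(1+\varepsilon_k)$, $(1-\delta)\varepsilon_k$ and $N_k$ holds, obtained by passing to the blow-up that resolves $|z| + |w-a_k|^{N_k}$ (or directly by a Fubini/polar-coordinate estimate, exactly as in the sketch of the "analytic singularities" case in Section 2).

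The next step is to arrange the parameters so that at $\delta = 0$ the ideal $\cI(\varphi)$ locally at $p_k$ forces vanishing to high order in $w - a_k$ (order growing with $k$), whereas for every fixed $\delta > 0$ the relaxed ideal $\cI((1-\delta)\varphi)$ only forces a much milder vanishing (bounded independently of $k$ once $k$ is large relative to $1/\delta$). Concretely, one wants: $1 \in \cI_-(\varphi)_{p_k}$ fails — i.e. the function $1$ (or the smallest relevant monomial) is not in $\cI((1-\delta)\varphi)$ near $p_k$ for $\delta$ small depending on $k$ — so that the stalk of $\cI_-(\varphi)$ at $p_k$ is a proper ideal requiring vanishing along $\{z = 0\}$ or at $p_k$; but the \emph{order} of this required vanishing, or the support of the quotient, is unbounded as $k \to \infty$. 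Since the $p_k$ accumulate at $0$, the quotient sheaf $\cO_B/\cI_-(\varphi)$ then has a non-coherent behavior at $0$: a coherent ideal sheaf has, in a neighborhood of a point, a uniformly bounded "complexity" (e.g. its zero scheme near $0$ would be defined by finitely many equations and have bounded local structure), which is contradicted by the infinitely many independent constraints accumulating at $0$. One clean way to package the contradiction: if $\cI_-(\varphi)$ were coherent it would be finitely generated near $0$, hence any $f \in \cI_-(\varphi)_0$ would have its germ at each nearby $p_k$ controlled by finitely many generators, forcing a uniform bound on the vanishing order of elements of $\cI_-(\varphi)$ at the $p_k$; choosing $N_k$ to grow (say $N_k = k$) while keeping $\sum \varepsilon_k < \infty$ and $\varepsilon_k$ decaying fast enough that $\cI((1-\delta)\varphi)$ stays "large" for each fixed $\delta$ violates this.

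In carrying this out I would: (i) fix the local model at $p_k$ and compute the jumping numbers / integrability exponents explicitly via the weighted blow-up of $|z|+|w-a_k|^{N_k}$, giving a criterion of the form $z^a(w-a_k)^b \in \cI(c\,\varphi)_{p_k} \iff a + 1 + \tfrac{b+1}{N_k} \cdot(\text{something}) > c(1+\varepsilon_k) + \dots$; (ii) check that $\sum_{j\neq k}\varepsilon_j \log|z|$ only contributes a uniformly small extra weight along $z=0$, so it does not disturb the estimates; (iii) choose $\varepsilon_k \downarrow 0$ and $N_k \uparrow \infty$ so that $\cI(\varphi)_{p_k}$ requires vanishing to order $\sim N_k$ in $w-a_k$ while $\cI((1-\delta)\varphi)_{p_k}$, for each fixed $\delta>0$ and all large $k$, requires only bounded vanishing — ensuring $\cI_-(\varphi)_{p_k}$ requires unbounded vanishing; (iv) conclude non-coherence at $0$ by the finite-generation obstruction above, using that the $p_k \to 0$. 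The main obstacle, and the part requiring the most care, is step (i) combined with (iii): one must verify that $1 \notin \cI((1-\delta)\varphi)$ near $p_k$ (so the regularized ideal is genuinely proper at each $p_k$, not trivial) while simultaneously the regularized ideal is strictly larger than $\cI(\varphi)$ there, and tune the three families of parameters $a_k, \varepsilon_k, N_k$ to make all these quantitative inequalities compatible — essentially a delicate bookkeeping of Lelong numbers and integrability exponents along the $z$-axis versus transversally at each $p_k$. The other estimates (the tail $\sum_{j\neq k}$, the passage from stalkwise statements to a global non-coherence statement) are routine once the model computation is pinned down.
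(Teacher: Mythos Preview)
Your overall plan---compute the multiplier ideals near each $p_k=(0,a_k)$ via a local model and then derive a contradiction from coherence at $0$---is the paper's strategy, and your identification of the relevant local model and of the bookkeeping needed in steps (i)--(ii) is accurate. The gap is in step (iv): your proposed contradiction mechanism is mis-stated and would not work as written.

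You claim that finite generation of a coherent $\cI_-(\varphi)$ near $0$ would ``force a uniform bound on the vanishing order of elements of $\cI_-(\varphi)$ at the $p_k$,'' to be violated by arranging $N_k\varepsilon_k\to\infty$. But the stalk $\cI_-(\varphi)_{p_k}$ (which a correct local computation shows contains $(z,(w-a_k)^{m_k})$ with $m_k=\lfloor N_k\varepsilon_k\rfloor$) always contains $z$, so the minimal vanishing order at $p_k$ is $1$ regardless of $m_k$; there is nothing unbounded to contradict. Nor can you fall back on ``the support of $\cO/\cI_-(\varphi)$ is non-analytic,'' since with $\sum_k N_k\varepsilon_k=\infty$ it is not clear that $\cI_-(\varphi)$ is trivial at a generic point of $\{z=0\}$.

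The correct contradiction, which the paper uses, is simpler and does not need $m_k\to\infty$ at all---only $m_k\ge 1$, i.e.\ $N_k\varepsilon_k>1$. Since each $\cI_-(\varphi)_{p_k}$ is a proper ideal, every global section of $\cI_-(\varphi)$ (equivalently, each of the finitely many local generators you posit near $0$) must vanish at every $p_k$; as the $p_k$ accumulate at $0$ along $\{z=0\}$, each such section restricts to a holomorphic function of $w$ with infinitely many zeros accumulating at $0$, hence vanishes identically on $\{z=0\}$ and is divisible by $z$. But $(w-a_k)^{m_k}\in\cI_-(\varphi)_{p_k}$ is not in $(z)$, so these sections cannot generate the stalk---contradicting Cartan's Theorem~A on the Stein ball (or directly the definition of coherence). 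Replace your step (iv) with this argument and the proof goes through; you may then drop the unnecessary requirement that $N_k\varepsilon_k\to\infty$.
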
  

The potential used above is a modification of the one given in \cite{GL} (and was suggested to the author by Demailly). Assume that the $a_k$'s are
distinct and not equal to zero. We recall the following elementary
calculation of \cite{Siu}.
\begin{mylem}
Let $a,b,$ and $c$ be some positive numbers such that $a$ and $c(1-\frac{\lceil a\rceil-a}{b})$ are not integers and $\lceil a\rceil-a < b <1$.
Let $p_0=\lceil a-1 \rceil$ and $q_0=\lfloor c(1-\frac{\lceil a\rceil-a}{b}) \rfloor$.
Then on $\C^2$, the multiplier ideal sheaf for the weight function 
$$a\,\log |z|+\log (|z|^b+|w|^c)$$
is generated by $z^{p_0+1}$ and $z^{p_0}w^{q_0}$.  Here $\lfloor \cdot \rfloor$ denotes the round-down and $\lceil \cdot \rceil$ denotes the round-up.
\end{mylem}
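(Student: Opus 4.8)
The plan is to use the torus symmetry of the weight to reduce the problem to monomials, and then to settle the membership of each monomial by a direct integrability computation in polar coordinates.

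\emph{Step 1: reduction to monomials.} The weight $\varphi=a\log|z|+\log(|z|^b+|w|^c)$ depends only on $|z|$ and $|w|$, hence is invariant under $(z,w)\mapsto(e^{\ii\alpha}z,e^{\ii\beta}w)$. Writing a germ as $f=\sum_{p,q}c_{pq}z^pw^q$, one extracts each homogeneous component by $c_{pq}z^pw^q=\frac{1}{(2\pi)^2}\int_0^{2\pi}\!\int_0^{2\pi}f(e^{\ii\alpha}z,e^{\ii\beta}w)\,e^{-\ii p\alpha-\ii q\beta}\,d\alpha\,d\beta$, and Minkowski's integral inequality, together with the invariance of $\varphi$ and of Lebesgue measure on a rotation-invariant ball $D$ about $0$, yields $\|c_{pq}z^pw^q\|_{L^2(D,\,e^{-2\varphi})}\le\|f\|_{L^2(D,\,e^{-2\varphi})}$. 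Hence $\cI(\varphi)$ is a monomial ideal: $f\in\cI(\varphi)_0$ forces $z^pw^q\in\cI(\varphi)$ for every $(p,q)$ with $c_{pq}\ne0$, and conversely, once the list of surviving monomials is known, one groups them and factors out $z^{p_0+1}$ and $z^{p_0}w^{q_0}$. So it suffices to decide for which $(p,q)$ one has $z^pw^q\in\cI(\varphi)$.

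\emph{Step 2: the integral.} In polar coordinates $r=|z|$, $s=|w|$, membership of $z^pw^q$ is equivalent to the finiteness of $\displaystyle\int_0^{\rho}\!\!\int_0^{\rho}\frac{r^{2p-2a+1}\,s^{2q+1}}{(r^b+s^c)^2}\,dr\,ds$, which I would analyze by splitting $[0,\rho]^2$ along $\{s^c=r^b\}$. On $\{s^c\le r^b\}$ one has $(r^b+s^c)^2\asymp r^{2b}$; integrating first in $s$ over $[0,r^{b/c}]$ and then in $r$, finiteness holds exactly when $p>a+b-1-\tfrac{b(q+1)}{c}$. On $\{r^b\le s^c\}$ one has $(r^b+s^c)^2\asymp s^{2c}$; the inner $r$-integral over $[0,s^{c/b}]$ is finite precisely when $p>a-1$, and then the $s$-integral is finite exactly when $q+1>c\bigl(1-\tfrac{p-a+1}{b}\bigr)$. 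Separately, if $p\le p_0-1=\lceil a\rceil-2<a-1$, the integral over a region $\{|z|<\rho'\}\times\{\tfrac14<|w|<\tfrac12\}$, on which $(|z|^b+|w|^c)^2$ is comparable to a positive constant, already diverges, so no monomial with $p\le p_0-1$ lies in $\cI(\varphi)$.

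\emph{Step 3: evaluating the thresholds.} Since $a\notin\Z$, $p_0+1=\lceil a\rceil$. For $(p,q)=(p_0+1,0)$ both region conditions hold: $\lceil a\rceil>a>a+b-1-\tfrac bc$ (using $b<1$), and $p-a+1=\lceil a\rceil-a+1>1>b$ makes the right-hand side $c\bigl(1-\tfrac{p-a+1}{b}\bigr)$ negative; increasing $q$ only relaxes both, so $z^pw^q\in\cI(\varphi)$ for all $p\ge p_0+1$. For $p=p_0=\lceil a\rceil-1$ one still has $p>a-1$, and a short computation shows both region conditions collapse to the single inequality $q+1>c\bigl(1-\tfrac{\lceil a\rceil-a}{b}\bigr)$; because this right-hand side is, by hypothesis, not an integer, the inequality is equivalent to $q\ge q_0$, and the hypothesis $\lceil a\rceil-a<b$ makes it positive, so $q_0\ge0$. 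Collecting everything, $\cI(\varphi)$ is exactly the set of monomials $z^pw^q$ with $p\ge p_0+1$, or with $p=p_0$ and $q\ge q_0$, i.e. $\cI(\varphi)=(z^{p_0+1},\,z^{p_0}w^{q_0})$.

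The main obstacle is Step 2: replacing the heuristic $\asymp$ by honest two-sided estimates and tracking the exponents so that the four convergence thresholds come out exactly, together with the careful check that it is precisely the non-integrality of $a$ and of $c\bigl(1-\tfrac{\lceil a\rceil-a}{b}\bigr)$ that keeps the relevant exponents off the critical value $-1$, which is what makes $p_0$ and $q_0$ well defined and the description sharp. Step 1 (torus averaging) and the final bookkeeping of Step 3 are routine once these integral estimates are in place.
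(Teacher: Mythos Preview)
The paper does not actually prove this lemma; it merely quotes it as ``the following elementary calculation of \cite{Siu}'' and uses it immediately afterwards. So there is nothing in the paper to compare your argument against.

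That said, your proposal is correct and is precisely the standard argument. Your concern about Step~2 is overstated: on $\{s^c\le r^b\}$ one has the honest two-sided bound $r^b\le r^b+s^c\le 2r^b$, hence $r^{2b}\le(r^b+s^c)^2\le 4r^{2b}$, and symmetrically on the other region; the ``$\asymp$'' is already rigorous with explicit constants $1$ and $4$, and the ensuing exponent bookkeeping is exactly as you wrote. The non-integrality hypotheses enter exactly where you indicate: $a\notin\Z$ gives $p_0=\lceil a\rceil-1>a-1$ strictly, so the inner $r$-integral on the region $\{r^b\le s^c\}$ converges at the borderline exponent $p=p_0$; and the non-integrality of $c\bigl(1-\tfrac{\lceil a\rceil-a}{b}\bigr)$ turns the strict inequality $q+1>c\bigl(1-\tfrac{\lceil a\rceil-a}{b}\bigr)$ into the clean condition $q\ge q_0$. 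The torus-averaging reduction to monomials in Step~1 is routine, and your Step~3 threshold analysis is accurate, including the observation that for $p=p_0$ the two region conditions coincide.
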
 
Using this lemma, we can calculate the multiplier ideal sheaf at $(0,a_k)$ since near $(0,a_k)$ the function is equisingular to $\log |z|+ \varepsilon_k \log(|z|+|w-a_k|^{N_k})$. 
Using the trivial inequality 
$$\frac{1}{2}(\alpha^{\gamma}+\beta^{\gamma}) \leq (\alpha+\beta)^{\gamma} \leq 2^{\gamma}(\alpha^{\gamma}+\beta^{\gamma})$$
for $\alpha,\beta,\gamma$ non negative, one can easily reduce the required check to the lemma.
In order to compute the multiplier ideal sheaf associated to $(1-\delta)\varphi$ at $(0, a_k)$, $0<\delta<1$, we apply the lemma to $a=1-\delta$, $b=1-\delta$ and $c=(1-\delta)N_k \varepsilon_k$.
Once $\varepsilon_k, N_k$ are fixed, the number $c(1-\frac{\lceil a\rceil-a}{b})$ is an integer only for countably many values of $\delta$, a situation that does not affect $\cI_-(\varphi)$.
When $\varepsilon_k$ converge to 0 fast enough, $\varphi$ well define a psh function on $B$. In particular, we can choose $\varepsilon_k$ positive such that $\sum \varepsilon_k < \infty$.  
 By this assumption, $\varphi \geq (1+\sum \varepsilon_k) \log|z|$. Hence it is not identically infinite.
In particular, $\varphi$ is the limit of a decreasing sequence of psh functions $\log |z|+\sum_{k_0 \geq k \geq 1} \varepsilon_k \log(|z|+|w-a_k|^{N_k})$. 
Hence it is a psh function on $B$ for any choice of $N_k$.

Now fix $C >1$ and choose $N_k$ so that $N_k \varepsilon_k \geq C$ and $N_k \varepsilon_k$ is not an integer. Consider a given index~$k$.
For such a choice and $\delta$ small enough, $q_{k,\delta}= \lfloor N_k \varepsilon_k (1-2\delta) \rfloor \geq 1$.
By the lemma, $\cI((1-\delta)\varphi)$ is generated at $(0,a_k)$ by $z, (w-a_k)^{q_{k,\delta}}$. 
In particular, $(z, (w-a_k)^{\lfloor N_k \varepsilon_k \rfloor}) \subset (\cI_-(\varphi),a_k)$. 
Now we prove that $\cI_-(\varphi)$ is not coherent by contradiction.
If $\cI_-(\varphi)$ is coherent, since $B$ is a Stein manifold, by Cartan theorem A for any $(0, a_k)$ the map $H^0(B, \cI_-(\varphi)) \to \cI_-(\varphi)_{(0,a_k)}$ is surjective.
For any $f \in H^0(B, \cI_-(\varphi))$, $f(0,a_k)=0$ for any $k$. Since $(0,a_k)$ has a cluster point 0 on the complex line $\{z=0\}$, we have $f|_{\{z=0\}} \equiv 0$. 
In other words, $f$ can be divided by $z$.
But $(w-a_k)^{\lfloor N_k \varepsilon_k \rfloor}$ should then be the restriction of such a function $f$, and this contradiction yields the proposition.
\myparagraph{}

We check below that the coherence may however hold for psh functions that are not too badly behaved. By definition, it is enough to treat the case when $1$ is actually a jumping value of the multiplier ideal sheaves $t\mapsto\cI(t\varphi)$. First, we observe that when $\varphi$ has analytic singularity, we have $\cI_-(\varphi)=\cI((1-\delta)\varphi)$ for $\delta>0$ small enough,
in particular, $\cI_-(\varphi)$ is coherent. In fact, if $\varphi$ has the form $\varphi=\sum \alpha_j \log |g_j|$ where $D_j=g^{-1}_j(0)$ are nonsingular irreducible divisors with normal crossings, then $\cI(\varphi)$ is the sheaf of functions $f$ on open sets $U \subset X$ such that $\int_U |f|^2 \prod |g_j|^{-2\alpha_j}dV < \infty$. Since locally the $g_j$ can be taken to be coordinate functions from a local coordinate system $(z_1, \ldots ,z_n)$, the integrability condition is that $f$ be divisible by $\prod g_j^{m_j}$ where $m_j>\lfloor \alpha_j \rfloor$.
Hence $\cI(\varphi) =\cO(-\lfloor D\rfloor) =\cO(-\sum \lfloor \alpha_j\rfloor D_j)$. Saying that 1 is a jumping coefficient in this case means that there exist some index subset $J$ such that for any $j_0 \in J$ we have $\alpha_{j_0}=\lfloor \alpha_{j_0} \rfloor$.
In this case for $\delta$ small enough we have that
$$\cI((1-\delta)\varphi) =\cO(- \sum_{j \in J}(\alpha_{j}+1)D_{j}-\sum_{j \notin J} \lfloor \alpha_j\rfloor D_j)$$
and the conclusion follows. More generally, if $\varphi$ has arbitrary analytic singularity, there exists a smooth modification $\nu: \tilde{X} \to X$
of $X$ such that $\nu^* \cI(\varphi)$ is an invertible sheaf $\cO(-D)$
associated with a normal crossing divisor $D=\sum \lambda_j D_j$, where $(D_j)$ are the components of the exceptional divisor of $\nu$.
Now, we have $K_{\tilde{X}}=\nu^* K_X+R$ where $R=\sum \rho_j D_j$ is the zero divisor of the Jacobian determinant of the blow-up map. By the direct image formula, we get
$$\cI(\varphi)=\nu_* (\cO(R) \otimes \cI(\varphi \circ \nu)),$$
and the proof is reduced to the divisorial case.

Even more generally, for any psh function $\varphi$ and any psh function $\psi$ with zero Lelong numbers (i.e., for every $x$, $\nu(\psi,x)=0$), we have
$\cI(\varphi)=\cI(\varphi+\psi)$ (cf. Proposition 2.3 \cite{Kim}). By the above discussion we thus get $\cI_-(\varphi+\psi)=\cI((1-\delta)(\varphi+\psi))$ for $\delta>0$ small if $\varphi$ has analytic singularities.

In particular, when $X$ is 1-dimensional, Siu's decomposition theorem \cite{Siu74} can be used, to decompose $dd^c\varphi$ into the sum of a convergent series of Dirac masses and of a current with zero Lelong numbers; only the locally finite set of points where the Lelong number number is at least $1$ plays a role;
we then see that $\cI_-(\varphi)=\cI((1-\delta)\varphi)$
for $\delta$ small enough, hence $\cI_-(\varphi)$ is coherent.
More generally, the following variant of Nadel's proof on the coherence of multiplier ideal sheaf \cite{Nad90} can be exploited.

\begin{mylem}
For any psh function $\varphi$ on $\Omega \subset X$ such that $E_1(\varphi):=\{x; \nu(\varphi,x)\ge 1\}$ consists of isolated points, the sheaf $\cI_-(\varphi)$ is a coherent sheaf of ideals over $\Omega$. 
\end{mylem}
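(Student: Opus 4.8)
The plan is to adapt Nadel's classical coherence argument for multiplier ideal sheaves, replacing the single weight $\varphi$ by the decreasing family $\{(1-\delta)\varphi\}_{\delta>0}$ and exploiting the fact that away from the discrete set $E_1(\varphi)$ the problem is trivial. First I would reduce to a local statement on a small coordinate ball $\Omega \subset \C^n$. Outside $E_1(\varphi)$, every point $x$ has $\nu(\varphi,x)<1$, so by Skoda's theorem $e^{-t\varphi}$ is locally integrable near $x$ for $t$ slightly larger than $1$; hence $\cI((1-\delta)\varphi)_x = \cO_{X,x}$ for $\delta$ small, and thus $\cI_-(\varphi)_x = \cO_{X,x}$. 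So $\cI_-(\varphi)$ coincides with $\cO_X$ on $\Omega \setminus E_1(\varphi)$, and the only issue is coherence at the isolated points of $E_1(\varphi)$; we may therefore assume $\Omega$ is a ball containing a single such point, say the origin.

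The core step is to produce finitely many generators of $\cI_-(\varphi)$ near $0$, using an $L^2$ extension/division scheme as in Nadel. Let $\cH_\delta$ denote the Hilbert space of holomorphic functions $f$ on $\Omega$ with $\int_\Omega |f|^2 e^{-(1-\delta)\varphi}\,dV < \infty$. One shows, by the strong Noetherian property of coherent sheaves applied to the increasing union $\bigcup_k \cH_{1/k}$-generated subsheaves, or more directly by choosing an orthonormal basis $(\sigma_j^{(\delta)})$ of each $\cH_\delta$ and taking a diagonal/limiting family, that there is a \emph{fixed} finite collection $g_1,\dots,g_m \in \bigcap_{\delta>0}\cH_\delta = H^0(\Omega,\cI_-(\varphi))$ whose germs at $0$ generate $\cI((1-\delta)\varphi)_0$ for all small $\delta>0$ simultaneously. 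The key input making the family uniform is the Ohsawa--Takegoshi / Skoda $L^2$ estimate applied at the scale of $\delta$: if a germ $f$ lies in $\cI_-(\varphi)_0$, then for each $\delta$ it lies in $\cI((1-\delta)\varphi)_0$, and the standard Nadel argument (using that the ideal $\cI((1-\delta)\varphi)$ is coherent, being a Nadel multiplier ideal) writes $f = \sum a_j^{(\delta)} g_j + (\text{error in } \mathfrak{m}^N)$ with $L^2$-controlled coefficients; letting $\delta \to 0$ and using the discreteness of $E_1(\varphi)$ to control the error term via Artin--Rees / Krull intersection, one gets $f \in (g_1,\dots,g_m)\cO_{X,0}$. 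The reverse inclusion $(g_1,\dots,g_m) \subset \cI_-(\varphi)$ is immediate since each $g_j \in \cH_\delta$ for all $\delta$.

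The main obstacle I anticipate is making the generating family \emph{uniform in $\delta$}: a priori the ideals $\cI((1-\delta)\varphi)$ form a decreasing family as $\delta \downarrow 0$ and their intersection could behave badly, which is exactly why the proposition above gives a non-coherent example when $E_1$ is not discrete. The discreteness hypothesis is what rescues the argument — it guarantees that only finitely many (in fact, locally one) points contribute, so the Noetherian/descending-chain obstruction disappears and the intersection stabilizes in a strong enough sense for the $L^2$ division to pass to the limit. The delicate point is verifying that the $L^2$ bounds in the Ohsawa--Takegoshi extension do not degenerate as $\delta \to 0$; here one uses that near an isolated point of $E_1(\varphi)$ the Lelong number is exactly controlled, so $e^{-(1-\delta)\varphi}$ has integrability uniform in $\delta$ on a punctured neighborhood, and the only growth is concentrated at the origin where finitely many monomials suffice. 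Once this uniformity is in hand, the coherence of $\cI_-(\varphi)$ follows by the same formal manipulation as in Nadel's original proof.
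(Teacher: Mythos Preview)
Your high-level strategy (adapt Nadel's argument) is correct, and your localization step and the observation that $\cI_-(\varphi)=\cO_X$ off $E_1(\varphi)$ are fine. But the core of your plan has a genuine gap: you try to produce generators $g_1,\dots,g_m\in\bigcap_\delta\cH_\delta$ that generate $\cI((1-\delta)\varphi)_0$ \emph{for all small $\delta$ simultaneously}, and then pass to the limit in a family of $L^2$ division problems. This is both stronger than what you need and not clearly true: the ideals $\cI((1-\delta)\varphi)$ strictly increase with $\delta$ in general, so elements of $\bigcap_\delta\cH_\delta$ have no reason to generate the larger ideals. The ``uniformity in $\delta$'' obstacle you flag is real for this route, and your sketch of how to overcome it (Artin--Rees, controlling degeneration of OT constants as $\delta\to 0$) is not convincing.

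The paper sidesteps the whole uniformity issue by never working $\delta$-by-$\delta$. One takes $\mathcal J$ to be the coherent ideal generated by $H^2_-(\Omega,\varphi):=\bigcap_\delta\cH_\delta$ (strong Noetherian property), and to show $\mathcal J_x=\cI_-(\varphi)_x$ one proves $\mathcal J_x+\cI_-(\varphi)_x\cap\mathfrak m_x^{s+1}=\cI_-(\varphi)_x$ for every $s$ and invokes Krull. Given $f\in\cI_-(\varphi)_x$ and a cut-off $\theta$, one solves a \emph{single} H\"ormander equation $\dbar u=\dbar(\theta f)$ with the \emph{full} weight $\tilde\varphi=\varphi+(n+s)\log|z-x|+|z|^2$, not $(1-\delta)\varphi$. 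The discreteness of $E_1(\varphi)$ enters only here, and only to guarantee that the right-hand side is $L^2(e^{-2\tilde\varphi})$: since $\dbar(\theta f)$ is supported on an annulus away from $x$, and on that annulus the Lelong numbers of $\varphi$ are $<1$, Skoda gives $e^{-2\varphi}\in L^1_{\mathrm{loc}}$ there. The resulting solution $u$ lies in $L^2(e^{-2\varphi})\subset L^2(e^{-2(1-\delta)\varphi})$ for every $\delta$, so $F=\theta f-u\in H^2_-(\Omega,\varphi)$ and $f_x-F_x=u_x\in\mathfrak m_x^{s+1}$. No limit in $\delta$ is ever taken. You should reorganize your argument around this single $\dbar$ equation with the undiminished weight; that is where the hypothesis on $E_1(\varphi)$ actually does its work.
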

\begin{proof}
We follow the proof of Nadel. Without loss of generality, we can assume that $\Omega$ is the unit ball.
By the strong noetherian property of coherent sheaves, the family of sheaves generated by finite subsets of $H_-^2(\Omega,\varphi):=\{f \in \cO_{\Omega}(\Omega); \int_{\Omega}|f|^2 e^{-2(1-\delta)\varphi} < \infty, \forall \delta \in{} ]0,1[\}$ has a maximal element on each compact subset of $\Omega$, hence $H^2_-(\Omega,\varphi)$ generates a coherent ideal sheaf $\mathcal{J}$ in $\cO_{\Omega}$. 
By definition we have $\mathcal{J} \subset \cI_-(\varphi)$.
We will prove that in fact $\mathcal{J} = \cI_-(\varphi)$, which shows in particular that $\cI_-(\varphi)$ is coherent.

For the other direction, it is enough to prove that $\mathcal{J}_x+\cI_-(\varphi)_x \cap m^{s+1}_x=\cI_-(\varphi)_x$ for every integer~$s$, by the Krull lemma. 
Let $f\in \cI_-(\varphi)_x$ be defined in a neighborhood $V$ of $x$ and let $\theta$ be a cut-off function with support in $V$ such that $\theta= 1$ in some neighborhood of $x$. 
We solve the $\dbar$ equation $\dbar u=\dbar(\theta f)$ by  H\"ormander’s $L^2$ estimates ,with respect to the strictly psh weight 
$$\tilde{\varphi}(z) :=\varphi(z) + (n+s) \log|z-x|+|z|^2.$$
The integrability is ensured by the fact that $\dbar(\theta f)$ vanishes near $x$ and the Skoda integrability theorem \cite{Sko}.
We remark that the Lelong number outside a small open neighborhood of 0 is strictly less than 1 pointwise by the assumption that $E_1(\varphi)$ is isolated at $x$.

Hence we get a solution $u$ such that $\int_{\Omega}|u|^2e^{-2\varphi} |z-x|^{-2(n+s)}d\lambda < \infty$, thus $F=\theta f-u$ is holomorphic.
$F\in H^2_-(\Omega,\varphi)$ as a sum of a function in $L^2(\Omega,\varphi)$ and a function in $H^2_-(\Omega,\varphi)$. Moreover, $f_x-F_x=u_x \in \cI_-(\varphi)_x \cap m^{s+1}_x$.
This finishes the proof. 
\end{proof}
\section{On the optimality of multiplier ideal sheaves}

We study here whether the ideal sheaves $\cI(\varphi)$ involved
in the hard Lefschetz theorem can be replaced by ideals
$\cI((1-\delta)\varphi)\supset \cI(\varphi)$.
\forget{
In \cite{DPS01}, the following bundle valued hard Lefschetz is proven.
\begin{mythm}{\rm(\cite{DPS01})}\label{dps-th2}
Let $(L,h)$ be a pseudo-effective line bundle on a
compact K\"ahler manifold $(X,\omega)$ of dimension $n$, let
$\Theta_{L,h}\ge 0$ be its curvature current and $\cI(h)$ the
associated multiplier ideal sheaf. 
Assume that $L$ admits a smooth metric such that its curvature form $\alpha$ is semipositive.
Then, the wedge multiplication
operator $\omega^q\wedge\bullet$ induces a surjective morphism
$$
\Phi^q_{\omega,h}:
H^0(X,\Omega_X^{n-q}\otimes L\otimes\cI(h))\longrightarrow
H^q(X,\Omega_X^n\otimes L\otimes\cI(h)).
$$
\end{mythm}
To generalise this theorem, it is interesting to know whether we can change the multiplier ideal sheaf by some bigger ideal sheaves. (The ideal sheaf in general cannot be taken as the holomorphic function sheaves as shown in the counter example of \cite{DPS01}.)
In this section we give a counter example to give a negative answer for the following possible generalization.}
In other words, if $(L,h)$ is a pseudo-effective line bundle on a
compact K\"ahler manifold $(X,\omega)$ of dimension $n$,
$i\Theta_{L,h}\ge 0$ its curvature current and $\cI(h)$ the
associated multiplier ideal sheaf, we study whether
for any $\delta \in [0,1]$ small enough the wedge multiplication
operator $\omega^q\wedge\bullet$ induces a surjective morphism
$$
\Phi^q_{\omega,h}:
H^0(X,\Omega_X^{n-q}\otimes L\otimes\cI((1-\delta)h))\longrightarrow
H^q(X,\Omega_X^n\otimes L\otimes\cI((1-\delta)h)).
$$
First, we recall the following special case of the hard Lefschetz theorem.
\forget{Let $(L,h)$ be a pseudo-effective line bundle on a
compact K\"ahler manifold $(X,\omega)$ of dimension $n$, let
$\Theta_{L,h}\ge 0$ be its curvature current and $\cI(h)$ the
associated multiplier ideal sheaf.}
Assume that $L$ admits a smooth metric $h_0$ such that its curvature form $\alpha$ is semipositive.
Then, the wedge multiplication
operator $\omega^q\wedge\bullet$ induces a surjective morphism for any $\delta \in [0,1]$
$$
\Phi^q_{\omega,h}:
H^0(X,\Omega_X^{n-q}\otimes L\otimes\cI((1-\delta)h))\longrightarrow
H^q(X,\Omega_X^n\otimes L\otimes\cI((1-\delta)h)).
$$
The proof of this case just consists of applying the hard Lefschetz theorem to the Hermitian line bundle $(L, h_0^{\delta}h^{1-\delta})$.
If the line bundle admits a positive singular metric $h_0$ such that the corresponding Lelong numbers are equal to $0$ at every point, by Proposition 2.3 in \cite{Kim}, for any $\delta \in [0,1]$, the metric $(L, h_0^{\delta}h^{1-\delta})$ has a multiplier ideal sheaf equal to $\cI((1-\delta)h)$. Then the bundle valued hard Lefschetz theorem also implies the surjectivity property.

The condition that the line bundle admits a positive singular metric such that the Lelong number of this metric is pointwise 0 implies in particular
by regularization (see e.g. Theorem 14.12 in \cite{analmeth}) that the
line bundle is nef. However, the converse is false by example 1.7 in \cite{DPS94}, in which the only positive singular metric on the nef line bundle is the singular one induced by a section. An alternative example is given in \cite{Koi}: there, Koike considers the anticanonical line bundle $-K_X$ of the blow-up of $\P^2$ at 9 points, and shows that there exists some configuration of the nine points such that $-K_X$ is nef, while the singular metric with minimal singularities is induced by a section $s \in H^0(X, -K_X)\setminus \{0\}$.
In particular, there exists no singular metric on $-K_X$ with curvature $\geq 0$, such that the Lelong number of the singular metric is equal to 0 at each point.

This condition is also non equivalent to the semipositivity of the line bundle, although it is obviously implied by semipositivity. A counter example for the converse direction is provided by \cite{BEGZ}, example 5.4 and \cite{Kim07}, example 2.14. Take a non-trivial rank 2 extension $V$ of the trivial line bundle by itself, over an elliptic curve $C$, and an ample line bundle $A$ over~$C$. Then consider $X=\P(V \oplus A)$ and the associated line bundle $\cO(1)$. It is big and nef, and this is enough to conclude that it admits a semipositive singular metric with
Lelong numbers equal to 0. In fact, it is enough to argue for
the semipositive metric with minimal singularity.
By the Kodaira lemma, there exists $m_0 \in \N$ such that $\cO(m_0)=\tilde{A} +E$ where $\tilde{A}$ is an ample line bundle over $X$ and $E$ is an effective line bundle over $X$. 
For any $m \geq m_0$, a metric on $\cO(m)$ is induced by a smooth strictly positive metric on the ample line bundle $\tilde{A}+\cO((m-m_0))$ and by a singular metric induced by a non zero section on the effective line bundle $E$.
This metric itself induces a metric on $\cO(1)$ which is by definition more singular than the metric with minimal singularity. It has pointwise Lelong numbers at most equal to~$\frac{1}{m}$.
Hence the metric with minimal singularity has Lelong numbers equal to 0 pointwise.
However, $\cO(1)$ cannot admit a smooth semipositive metric: for this,
note that $X$ has a submanifold $Y \cong \P(V)$ given by the surjective bundle morphism $V \oplus A \to V$; a smooth semipositive metric on $\cO(1)$ would induce a smooth semipositive metric on $\cO_Y(1)$ by restriction, which is impossible by \cite{DPS94}.

As we have seen, the extension is possible if the minimal metric is not ``too bad''. This is also true in the purely exceptional case, as we will now see.

Let $X$ be the blow up a point of some smooth complex manifold $Y$ of
dimension~$n$. Denote by $E$ the exceptional divisor.
Let $L$ be a semi-positive line bundle on $X$ such that $L|_E$ is not trivial
on~$E$. Consider the line bundle $L+E$.
Take $h$ to be metric on $L+E$ induced by the canonical section of the effective divisor $E$, tensor product with the given semi-positive metric on $L$.
We start by remarking that for any $\delta \in{}]0,1]$ we have
$\cI((1-\delta)h)=\cO_X$. 
Hence the lower semicontinuous regularization of the multiplier ideal sheaf is trivial.
We claim that the map
$$H^0(X, \Omega_X^{n-q} \otimes L \otimes E) \to H^q(X, K_X \otimes L \otimes E)$$
is surjective for every $q \geq 1$.
First, by the hard Lefschetz theorem, we find that
$$H^0(X, \Omega_X^{n-q} \otimes L ) \to H^q(X, K_X \otimes L )$$
is surjective for every $q \geq 1$.
On the other hand, we have the following commutative diagram
$$
\begin{matrix}
H^0(X, \Omega_X^{n-q} \otimes L ) &\longrightarrow&
H^0(X, \Omega_X^{n-q} \otimes L \otimes E) \\
\noalign{\vskip5pt}
\Big\downarrow &&\Big\downarrow \\
\noalign{\vskip5pt}
H^q(X, K_X \otimes L ) &\longrightarrow& H^q(X, K_X \otimes L \otimes E).
\end{matrix}
$$
To show that the right arrow is surjective, it is enough to show that the bottom arrow is surjective.
By Serre duality, this is equivalent to proving that
$$H^{n-q}(X, -L-E) \to H^{n-q}(X, -L)$$
is injective.
By considering the long exact sequence associated to the short exact sequence
$$0 \to \cO_X(-L-E) \to \cO_X(-L) \to \cO(-L)|_E \to 0,$$
it is enough to show that for any $q\geq 1$
$$H^{n-q-1}(E, -L|_E)=0.$$
Remind that $E \cong \P^{n-1}$.
For any $q \in \Z$, for $0 < i< n-1$, we have that $H^i(\P^{n-1}, \cO(q))=0$.
Remind also that the Picard group of $\P^{n-1}$ is $\Z$.
This finishes the case $q \leq n-2$, and the case $q=n-1$ also
holds, since our assumptions $L\geq 0$ and $L|_E$ non trivial imply
$H^0(E, -L|_E)=0$.
The same arguments also work for $L=\cO_X$. We have an exact sequence
$$H^0(X, \cO_X) \to H^0(E, \cO_E) \to H^1(X, \cO(-E)) \to H^1(X, \cO_X).$$
The first morphism is an isomorphism -- it is just a restriction morphism applied to constant functions -- hence $H^1(X, \cO(-E)) \to H^1(X, \cO_X)$ is injective.
\vskip8pt

In general, as discussed in \cite{DPS4}, the minimal singular metric of a psef line bundle can still be very singular, and this fact might lead to a non coherent lower semicontinuous regularization of the multiplier ideal sheaf. It thus seems to be a difficult problem to improve the hard Lefschetz theorem by replacing the given multiplier ideal sheaf by its lower semicontinuous regularization, if~at all possible.

\end{document}